\newtheorem{theorem}{Theorem}[section]
\newtheorem{convention}[theorem]{Convention}
\newtheorem{corollary}[theorem]{Corollary}
\newtheorem{lemma}[theorem]{Lemma}
\newtheorem{proposition}[theorem]{Proposition}
\theoremstyle{definition}
\newtheorem{remark}[theorem]{Remark}
\numberwithin{equation}{section}
\renewcommand{\Im}{\mathrm{Im}}
\renewcommand{\Re}{\mathrm{Re}}
\patchcmd{\section}{\scshape}{\bfseries}{}{}
\renewcommand{\@secnumfont}{\bfseries}
\makeatletter\newcommand{\tpmod}[1]{{\@displayfalse \pmod{#1}}}
\begin{document}

\title{Mixed Moments of the Riemann Zeta and Dirichlet $L$-Functions}

\author{Ikuya Kaneko}
\address{The Division of Physics, Mathematics and Astronomy, California Institute of Technology, 1200 E. California Blvd., Pasadena, CA 91125, USA}
\urladdr{\href{https://sites.google.com/view/ikuyakaneko/}{https://sites.google.com/view/ikuyakaneko/}}
\email{ikuyak@icloud.com}

\thanks{The author acknowledges the support of the Masason Foundation.}

\subjclass[2010]{11M06 (primary); 11F03, 11F72 (secondary)}

\keywords{Motohashi's formula, Riemann zeta function, Dirichlet $L$-functions}

\date{\today}

\dedicatory{To Yoichi Motohashi with admiration}

\begin{abstract}
We prove Motohashi's formula for a mixed second moment of the Riemann~zeta function~and a Dirichlet $L$-function attached to a primitive Dirichlet character modulo $q \in \mathbb{N}$. If $q$ is an odd prime, our reciprocity formula is consistent with Motohashi's result in the early 1990s. The cubic moment~side~features two versions of central $L$-values of automorphic forms on $\Gamma_{0}(q) \backslash \mathbb{H}$. The methods involve a blend of analytic number theory and automorphic~forms.
\end{abstract}

\maketitle
\tableofcontents

\section{Introduction}
In a series of work culminating in~\cite{Motohashi1993}, Motohashi proves a dualising formula relating the fourth moment of the Riemann zeta function to the cubic moment of $L$-functions attached to automorphic forms (Maa{\ss} and holomorphic forms as well as Eisenstein series) for $\mathrm{SL}_{2}(\mathbb{Z})$. Given a primitive Dirichlet character $\psi$ modulo $q \in \mathbb{N}$, we address Motohashi's formula for
\begin{equation*}
\int_{\mathbb{R}} \left|\zeta \left(\frac{1}{2}+it \right) L \left(\frac{1}{2}+it, \psi \right) \right|^{2} g(t) dt.
\end{equation*}
This problem was posed by Motohashi~\cite{Motohashi1993-2} at the RIMS conference in 1992. He handles the case where $q$ is an odd prime, and then considers the case of $q$ squarefree. Our identity~is valid for all $q \in \mathbb{N}$, and describes all terms on the cubic moment side in terms of newforms. A back-of-the-envelope argument in Section~\ref{sketch} implies that we should expect something like
\begin{equation}\label{rough}
\int_{\mathbb{R}} \left|\zeta \left(\frac{1}{2}+it \right) L \left(\frac{1}{2}+it, \psi \right) \right|^{2} g(t) dt 
\leftrightsquigarrow \sum_{f \in \mathcal{B}^{\ast}(q)} L \left(\frac{1}{2}, f \right)^{2} L \left(\frac{1}{2}, f \otimes \overline{\psi} \right) \check{g}(t_{f}),
\end{equation}
where $\mathcal{B}^{\ast}(q)$ stands for an orthonormal basis of Hecke--Maa{\ss} newforms of level $q$ and trivial central character, $t_{f} \in \mathbb{R} \cup [-i\vartheta, i\vartheta]$ is the spectral parameter of $f$ with $\vartheta = \frac{7}{64}$ at the current state of knowledge, and $\check{g}$ is a certain elaborate integral transform of $g$.

\subsection{Statement of Results}
Fix a primitive Dirichlet character $\psi$ modulo $q$, and let $\mathcal{R}_{4}^{+}$~be the subdomain of $\mathbb{C}^{4}$ where all four parameters have real parts greater than one. This work aims to generalise a spectral reciprocity formula of Motohashi~\cite{Motohashi1997-2}. Given a sufficiently well-behaved test function $g$ (see Convention~\ref{convention}), we consider the \textit{mixed second moment}
\begin{equation}\label{second-moment}
\mathcal{M}_{2}(s_{1}, s_{2}, s_{3}, s_{4}; g; \psi)
\coloneqq \int_{\mathbb{R}} \zeta(s_{1}+it) L(s_{2}+it, \psi) \zeta(s_{3}-it) L(s_{4}-it, \overline{\psi}) g(t) dt,
\end{equation}
where $(s_{1}, s_{2}, s_{3}, s_{4}) \in \mathcal{R}_{4}^{+}$. We initially work in the region of absolute convergence, and then take the limit $(s_{1}, s_{2}, s_{3}, s_{4}) \to (\frac{1}{2}, \frac{1}{2}, \frac{1}{2}, \frac{1}{2})$ after the meromorphic continuation.

The cubic moment side entails twisted spectral mean values
\begin{multline*}
\mathcal{J}_{\pm}^{\text{Maa{\ss}}} \coloneqq \sum_{q_{0} M \mid q} 
\sum_{f \in \mathcal{B}^{\ast}(q_{0})} \frac{L(\frac{s_{1}+s_{2}+s_{3}+s_{4}-1}{2}, f) 
L(\frac{1+s_{1}-s_{2}+s_{3}-s_{4}}{2}, f) L(\frac{1+s_{1}-s_{2}-s_{3}+s_{4}}{2}, f \otimes \overline{\psi})}
{L(1, \operatorname{ad} f)}\\
\times \Theta_{q}^{\text{Maa{\ss}}}(s_{1}, s_{2}, s_{3}, s_{4}, f),
\end{multline*}
\begin{multline*}
\mathcal{J}_{\pm}^{\text{Eis}} \coloneqq \sum_{c_{\chi}^{2} \mid M \mid q} 
\int_{\mathbb{R}} \frac{L(\frac{1+s_{1}-s_{2}+s_{3}-s_{4}}{2}+it, \chi) 
L^{\frac{q}{M}}(\frac{s_{1}+s_{2}+s_{3}+s_{4}-1}{2}+it, \chi) 
L(\frac{1+s_{1}-s_{2}-s_{3}+s_{4}}{2}+it, \chi \overline{\psi})}{L^{q}(1+2it, \chi^{2})}\\
\times \frac{L(\frac{s_{1}+s_{2}+s_{3}+s_{4}-1}{2}-it, \overline{\chi}) 
L^{\frac{q}{M}}(\frac{1+s_{1}-s_{2}+s_{3}-s_{4}}{2}-it, \overline{\chi}) 
L(\frac{1+s_{1}-s_{2}-s_{3}+s_{4}}{2}-it, \overline{\chi \psi})}{L^{q}(1-2it, \overline{\chi}^{2})}\\
\times \Theta_{\chi, q}^{\text{Eis}}(s_{1}, s_{2}, s_{3}, s_{4}, t) \frac{dt}{2\pi},
\end{multline*}
\begin{multline*}
\mathcal{J}_{+}^{\text{hol}} \coloneqq  \sum_{q_{0} M \mid q} 
\sum_{f \in \mathcal{B}_{\text{hol}}^{\ast}(q_{0})} \frac{L(\frac{s_{1}+s_{2}+s_{3}+s_{4}-1}{2}, f) 
L(\frac{1+s_{1}-s_{2}+s_{3}-s_{4}}{2}, f) L(\frac{1+s_{1}-s_{2}-s_{3}+s_{4}}{2}, f \otimes \overline{\psi})}
{L(1, \operatorname{ad} f)}\\
\times \Theta_{q}^{\text{hol}}(s_{1}, s_{2}, s_{3}, s_{4}, f),
\end{multline*}
where $\Theta_{q}^{\Diamond}(s_{1}, s_{2}, s_{3}, s_{4}, f)$ for $\Diamond \in \{\text{Maa{\ss}}, \text{hol} \}$ and $\Theta_{\chi, q}^{\text{Eis}}(s_{1}, s_{2}, s_{3}, s_{4}, t)$ are completely explicit expressions defined in~\eqref{Theta-1}--\eqref{Theta-3}, with the dependence on $\psi$ and $M$ suppressed. Moreover, the notation $L^{q}$ denotes an $L$-function with Euler factors dividing $q$ removed. The definition of the adjoint square $L$-function $L(s, \operatorname{ad} f)$ in $\Re(s) > 1$ is given in~\eqref{adjoint}. Write
\begin{equation*}
\mathcal{J}_{\pm} \coloneqq \mathcal{J}_{\pm}^{\text{Maa{\ss}}}+\mathcal{J}_{\pm}^{\text{Eis}}+\delta_{\pm = +} \mathcal{J}_{+}^{\text{hol}}.
\end{equation*}
We are now ready to state our spectral reciprocity formula to which we have alluded.
\begin{theorem}\label{main}
Let $(s_{1}, s_{2}, s_{3}, s_{4}) \in \mathbb{C}^{4}$, and let $\psi$ be a primitive Dirichlet character modulo $q \in \mathbb{N}$. If a test function $g$ obeys Convention~\ref{convention}, then we have that
\begin{multline*}
\mathcal{M}_{2}(s_{1}, s_{2}, s_{3}, s_{4}; g; \psi) = \mathcal{N}(s_{1}, s_{2}, s_{3}, s_{4}; g; \psi)\\
 + \sum_{\pm} \left(\mathcal{J}_{\pm}(s_{1}, s_{2} s_{3}, s_{4}; g; \psi)
 + \overline{\mathcal{J}_{\pm}(\overline{s_{3}}, \overline{s_{4}}, \overline{s_{2}}, \overline{s_{1}}; g; \psi)} \right),
\end{multline*}
where $\mathcal{N}$ denotes an explicitly computable main term.
\end{theorem}

When $q$ is an odd prime, Theorem~\ref{main} matches Motohashi's result~\cite{Motohashi1993-2}. In regard~to this restriction, he mentioned that ``this is to avoid unnecessary complexity, and in fact its elimination is by no means difficult." An absence of a description of the Fourier expansion of Eisenstein series for arbitrary conductors caused a problem at the time, and the Kuznetsov formula for nonsquarefree conductors was recently shown by Blomer--Khan~\cite{BlomerKhan2019} with the aid of explicit formul{\ae} for the Fourier coefficients of a complete orthonormal basis.

\begin{remark}
Let $\zeta_{K}(s)$ be the Dedekind zeta function associated to a quadratic number field~$K$ with discriminant $D$. Then the factorisation
\begin{equation*}
\zeta_{K}(s) = \zeta(s) L(s, \psi_{D})
\end{equation*}
holds for a quadratic character $\psi_{D}$ modulo $|D|$. Theorem~\ref{main} can be seen as an expression for the second moment of Dedekind zeta functions in terms of the cubic moment of automorphic $L$-functions on $\mathrm{GL}_{2}$.
\end{remark}

\begin{corollary}\label{corollary}
Let $\psi$ be a quadratic character modulo $q \ll T^{1-\varepsilon}$. Then we have that
\begin{equation*}
\int_{T}^{T+q^{\frac{1}{3}} T^{\frac{2}{3}}} \left|\zeta \left(\frac{1}{2}+it \right) L \left(\frac{1}{2}+it, \psi \right) \right|^{2} dt \ll_{\varepsilon} q^{\frac{1}{3}+\varepsilon} T^{\frac{2}{3}+\varepsilon}.
\end{equation*}
\end{corollary}

Corollary~\ref{corollary} is a variant of the short interval fourth moment bound of Iwaniec~\cite{Iwaniec1980}:
\begin{equation*}
\int_{T}^{T+T^{\frac{2}{3}}} \left|\zeta \left(\frac{1}{2}+it \right) \right|^{4} dt \ll_{\varepsilon} T^{\frac{2}{3}+\varepsilon}.
\end{equation*}

\subsection{Method of Proof}\label{sketch}
We now offer a heuristic overview on the genesis of our reciprocity formula in Theorem~\ref{main}. This is a back-of-the-envelope type sketch geared towards experts. For the sake of argument, we use approximate tools, while our proof is based on a circumspect analysis of $L$-functions in the region of absolute convergence. We also elide all polar terms, correction factors, and the averaging over $t$ whose purpose is to ensure convergence.

The overall strategy is inspired by the work of Motohashi~\cite[Sections~4.3--4.7]{Motohashi1997-2}. The off-diagonal contribution after Atkinson's dissection looks like
\begin{equation*}
\sum_{n, m} \tau(n, \overline{\psi}) \tau(n+m, \psi),
\end{equation*}
where the twisted divisor function is defined by\footnote{This notation should not induce confusion with the Gau{\ss} sum~\eqref{eq:Gauss-sum}.}
\begin{equation*}
\tau(m, \psi) \coloneqq \sum_{d \mid m} \psi(d).
\end{equation*}
We make use of an approximate functional equation (see Lemma~\ref{approximate-functional-equation})
\begin{equation*}
\tau(m, \psi) = \frac{1+\psi(m)}{\tau(\psi)} \sum_{\ell = 1}^{\infty} \frac{1}{\ell} 
f \left(\frac{\ell}{\sqrt{m}} \right) \ \sideset{}{^{\ast}} \sum_{h \tpmod{\ell q}} \psi(h) e_{\ell q}(hm),
\end{equation*}
where $f(x)$ and its derivatives are of rapid decay. This is thought of as a simple alternative to the Duke--Friedlander--Iwaniec circle method. We now make use of Vorono\u{\i} summation~to the sum over $n$, or the functional equation for the twisted Estermann zeta function. We are then led to sums of Kloosterman sums of the shape
\begin{equation*}
\sum_{q \mid c} \frac{S(m, \pm n; c)}{c} h \left(\frac{4\pi \sqrt{mn}}{c} \right).
\end{equation*}
This is consistent with~\cite[Equation~(1.6)]{Motohashi1997-3}. We now apply the Kloosterman summation formula (Theorem~\ref{Kuznetsov-formula}) of level $q$ and trivial central character. Two $\mathrm{GL}_{2}$ $L$-functions and one twisted $\mathrm{GL}_{2}$ $L$-function emerge when one executes the sums over $m$ and~$n$, from which~\eqref{rough} follows. Note that Motohashi~\cite{Motohashi1993-2,Motohashi1997-3} only handled a certain specific test function.

\subsection*{Acknowledgements}
The author is indebted to Peter Humphries, Eren Mehmet K{\i}ral, and Yoichi Motohashi for helpful comments and encouragement.

\section{Standard Conventions}
The Vinogradov symbol $A \ll B$ or the big $O$~notation $A = \mathrm{O}(B)$~signifies~that $|A| \leq C|B|$ for an effectively computable constant $C > 0$. We abbreviate $e(\alpha) \coloneqq \exp(2\pi i\alpha)$ and~$e_{p}(\alpha) \coloneqq e(\frac{\alpha}{p})$ for $\alpha \in \mathbb{C}$ and $p \in \mathbb{R}$. The Kronecker symbol $\delta_{\mathrm{S}}$ detects $1$ if the statement S is true and $0$ otherwise. We denote the Gau{\ss} sum associated to a Dirichlet character $\psi$ modulo $q$ by
\begin{equation}\label{eq:Gauss-sum}
\tau(\psi, h) \coloneqq \sum_{a \tpmod q} \psi(a) e_{q}(ah).
\end{equation}
We write $\tau(\psi) \coloneqq \tau(\psi, 1)$. Other frequently occurring exponential sums are the Ramanujan sums and Kloosterman sums
\begin{equation*}
r_{q}(h) \coloneqq \sum_{d \mid (h, q)} \mu \left(\frac{q}{d} \right) d, \qquad 
S(m, n; q) \coloneqq \sideset{}{^{\ast}} \sum_{a \tpmod q} e_{q}(am+\overline{a} n),
\end{equation*}
where $\overline{a}$ denotes the multiplicative inverse modulo $q$, namely a solution to $a \overline{a} \equiv 1 \tpmod q$.

Furthermore, we assume that a fixed test function $g$ obeys the following assumptions.
\begin{convention}\label{convention}
The function $g$ is real valued on $\mathbb{R}$, and there exists a large constant~$A > 0$ such that it is holomorphic and $\ll (1+|t|)^{-A}$ on a horizontal strip $|\Im(t)| \leq A$. All implicit constants in the ensuing Vinogradov symbols may possibly depend on $A$ where applicable.
\end{convention}

\section{Arithmetic Preliminaries}
This section summarises fundamental properties of the twisted Estermann zeta function. The Ramanujan expansion for the twisted divisor function is also established.

\subsection{Estermann Zeta Function}
For $h, \ell, q \in \mathbb{N}$ with $(h, \ell q) = 1$ and $\Re(s) > 1$, we define
\begin{equation*}
D_{\psi} \left(s, \xi; \frac{h}{\ell q} \right) \coloneqq \sum_{n = 1}^{\infty} \sigma_{\xi}(n, \psi) e_{\ell q}(hn) n^{-s},
\end{equation*}
where $\psi$ is an arbitrary primitive Dirichlet character modulo $q$, and
\begin{equation}\label{twisted-divisor-function}
\sigma_{\xi}(n, \psi) \coloneqq \sum_{d \mid n} \psi(d) d^{\xi}.
\end{equation}
Analytic properties of the twisted Estermann zeta function follow from the identity
\begin{equation}\label{Hurwitz}
D_{\psi} \left(s, \xi; \frac{h}{\ell q} \right) = (\ell q)^{\xi-2s} \sum_{a, b \tpmod{\ell q}} 
\psi(b) e_{\ell q}(abh) \zeta \left(s, \frac{a}{\ell q} \right) \zeta \left(s-\xi, \frac{b}{\ell q} \right),
\end{equation}
where for $\alpha \in \mathbb{R}$ and $\Re(s) > 1$,
\begin{equation*}
\zeta(s, \alpha) \coloneqq \sum_{n+\alpha > 0} (n+\alpha)^{-s}
\end{equation*}
is the Hurwitz zeta function. It has a meromorphic continuation to $\mathbb{C}$ with a simple~pole at $s = 1$ of residue 1, and satisfies the functional equation
\begin{equation}\label{functional-equation-for-Hurwitz-zeta-function}
\zeta(s, \alpha) = \sum_{\pm} G^{\mp}(1-s) \zeta^{(\pm \alpha)}(1-s),
\end{equation}
where $G^{\pm}(s) \coloneqq (2\pi)^{-s} \Gamma(s) \exp(\pm \frac{i\pi s}{2})$ and $\zeta^{(\alpha)}(s) \coloneqq \sum_{n = 1}^{\infty} e(\alpha n) n^{-s}$. The formula~\eqref{functional-equation-for-Hurwitz-zeta-function}~for $\alpha \in \mathbb{Q}$ can be viewed as a restatement of the Poisson summation formula in residue classes. As a function of a single variable $s$, the twisted Estermann zeta function has a meromorphic continuation to $\mathbb{C}$ with a simple pole at $s = 1+\xi$ of residue
\begin{equation*}
\frac{\overline{\psi}(h) \tau(\psi) L(1+\xi, \overline{\psi})}{\ell^{1+\xi} q},
\end{equation*}
provided $\xi \ne 0$. This follows from routine computations. In fact, one evaluates
\begin{align*}
\mathop{\mathrm{res}}_{s = 1+\xi} D_{\psi} \left(s, \xi; \frac{h}{\ell q} \right)
& = (\ell q)^{-2-\xi} \sum_{a, b \tpmod{\ell q}} \psi(b) e_{\ell q}(abh) \zeta \left(1+\xi, \frac{a}{\ell q} \right)\\
& = (\ell q)^{-2-\xi} \sum_{a \tpmod{\ell q}} \sum_{b_{1} = 1}^{q} \psi(b_{1}) 
\sum_{b_{2} = 0}^{\ell-1} e_{\ell q}(ab_{1} h) e_{\ell}(ab_{2} h) \zeta \left(1+\xi, \frac{a}{\ell q} \right).
\end{align*}
By orthogonality and the definition of the Gau{\ss} sum, the right-hand side is equal to
\begin{equation*}
\ell^{-1-\xi} q^{-2-\xi} \sum_{a, b \tpmod q} \psi(b) e_{q}(abh) \zeta \left(1+\xi, \frac{a}{q} \right)
 = \frac{\overline{\psi}(h) \tau(\psi) L(1+\xi, \overline{\psi})}{\ell^{1+\xi} q}.
\end{equation*}

Furthermore, the functional equation for $D_{\psi}(s, \xi; \frac{h}{\ell q})$ reads as follows.
\begin{theorem}\label{functional-equation-1}
The twisted Estermann zeta function enjoys the functional equation
\begin{align*}
\begin{split}
D_{\psi} \left(s, \xi; \frac{h}{\ell q} \right) &= \psi(-\overline{h})(2\pi)^{2s-\xi-2}(\ell q)^{\xi-2s+1} \Gamma(1-s) \Gamma(1+\xi-s)\\ 
&\quad \times \left[\left\{e \left(\frac{\xi}{4} \right)+\psi(-1) e \left(-\frac{\xi}{4} \right) \right\} D_{\psi} \left(1+\xi-s, \xi; \frac{\overline{h}}{\ell q} \right) \right.\\
&\quad - \left. \left\{e \left(\frac{2s-\xi}{4} \right)+\psi(-1) e \left(-\frac{2s-\xi}{4} \right) \right\} D_{\psi} \left(1+\xi-s; \xi; -\frac{\overline{h}}{\ell q} \right) \right],
\end{split}
\end{align*}
where $\overline{h}$ denotes the multiplicative inverse of $h$ modulo $\ell q$, namely $h \overline{h} \equiv 1 \tpmod{\ell q}$.
\end{theorem}
This is essentially equivalent to the twisted $\mathrm{GL}_{2}$ Vorono\u{\i} summation formula.

\begin{proof}
It follows from~\eqref{Hurwitz} that $D_{\psi}(s, \xi; \frac{h}{\ell q})$ equals
\begin{align*}
&(2\pi)^{2s-\xi-2}(\ell q)^{\xi-2s} \Gamma(1-s) \Gamma(1+\xi-s)\\
& \times \left[\left\{\psi(-1) e \left(\frac{\xi}{4} \right)+e \left(-\frac{\xi}{4} \right) \right\} \sum_{a, b \tpmod{\ell q}} 
\psi(b) e_{\ell q}(-abh) \zeta^{(\frac{a}{\ell q})}(1-s) \zeta^{(\frac{b}{\ell q})}(1+\xi-s) \right.\\
& - \left. \left\{e \left(\frac{2s-\xi}{4} \right)+\psi(-1) e \left(-\frac{2s-\xi}{4} \right) \right\} \sum_{a, b \tpmod{\ell q}} 
\psi(b) e_{\ell q}(abh) \zeta^{(\frac{a}{\ell q})}(1-s) \zeta^{(\frac{b}{\ell q})}(1+\xi-s) \right].
\end{align*}
We then appeal to the identity
\begin{equation*}
\zeta^{(\frac{a}{\ell q})}(1-s) \zeta^{(\frac{b}{\ell q})}(1+\xi-s) = (\ell q)^{2s-\xi-2} 
\sum_{c, d \tpmod{\ell q}} e_{\ell q}(ac+bd) \zeta \left(1-s, \frac{c}{\ell q} \right) \zeta \left(1+\xi-s, \frac{d}{\ell q} \right),
\end{equation*}
which follows from expanding $\zeta^{(\frac{a}{\ell q})}(1-s)$ and $\zeta^{(\frac{b}{\ell q})}(1+\xi-s)$ into sums over residue classes modulo $\ell q$. Then we observe
\begin{equation*}
\sum_{a, b \tpmod{\ell q}} \psi(b) e_{\ell q}(ac+bd-abh)
 = \sum_{b \tpmod{\ell q}} \psi(b) e_{\ell q}(bd) \delta_{c \equiv bh \tpmod{\ell q}}
 = \psi(c \overline{h}) e_{\ell q}(cd \overline{h}).
\end{equation*}
The proof of Theorem~\ref{functional-equation-1} is now complete.
\end{proof}

\subsection{Ramanujan Expansion}
The main result of this subsection is the following.
\begin{lemma}[Twisted Ramanujan expansion]\label{Ramanujan}
Let $\psi$ be a Dirichlet character modulo $q$.~Then we have for any $n \in \mathbb{N}$ and $\Re(\xi) > 0$ that
\begin{equation*}
\frac{\sigma_{\xi}(n, \psi)}{n^{\xi}} = \frac{L(1+\xi, \overline{\psi})}{\tau(\overline{\psi})} \sum_{\ell = 1}^{\infty} 
\frac{1}{\ell^{1+\xi}} \ \sideset{}{^{\ast}} \sum_{h \tpmod{\ell q}} \overline{\psi}(h) e_{\ell q}(hn).
\end{equation*}
\end{lemma}

\begin{proof}
We express the coprimality condition $(h, \ell q) = 1$ via M\"{o}bius inversion, justifying that the coefficients of $\ell^{-\xi}$ are equal on both sides of
\begin{equation}\label{Mobius-function}
\frac{\sigma_{\xi}(n, \psi) \tau(\overline{\psi})}{L(1+\xi, \overline{\psi}) n^{\xi}} = \sum_{\ell = 1}^{\infty} \frac{1}{\ell^{1+\xi}} 
\sum_{d \mid \ell} \mu(d) \overline{\psi}(d) \sum_{h \tpmod{\frac{\ell q}{d}}} \overline{\psi}(h) e_{\frac{\ell q}{d}}(hn).
\end{equation}
It is convenient for the reader to describe a straightforward way to prove Lemma~\ref{Ramanujan}. If we assume that $\psi$ is primitive,~then the right-hand side of~\eqref{Mobius-function} is equal to
\begin{equation*}
\sum_{\ell = 1}^{\infty} \frac{1}{\ell^{1+\xi}} \sum_{d = 1}^{\infty} \mu(d) 
\frac{\overline{\psi}(d)}{d^{1+\xi}} \sum_{h \tpmod{\ell q}} \overline{\psi}(h) e_{\ell q}(hn)
 = \frac{1}{L(1+\xi, \overline{\psi})} \sum_{\ell = 1}^{\infty} \ell^{-1-\xi} 
\sum_{h \tpmod{\ell q}} \overline{\psi}(h) e_{\ell q}(hn).
\end{equation*}
The identity $\tau(\psi, m) = \overline{\psi}(m) \tau(\psi)$ implies
\begin{equation*}
\frac{1}{L(1+\xi, \overline{\psi})} \sum_{\ell = 1}^{\infty} \ell^{-1-\xi} 
\sum_{h_{1} = 1}^{q} \overline{\psi}(h_{1}) e_{\ell q}(h_{1} n) \sum_{h_{2} = 0}^{\ell-1} e_{\ell}(h_{2} n)
 = \frac{\psi(n) \sigma_{-\xi}(n, \overline{\psi}) \tau(\overline{\psi})}{L(1+\xi, \overline{\psi})},
\end{equation*}
Lemma~\ref{Ramanujan} now follows from the relation $\sigma_{\xi}(n, \psi) n^{-\xi} = \psi(n) \sigma_{-\xi}(n, \overline{\psi})$.
\end{proof}

We state the following approximate functional equation since it is of independent interest.
\begin{lemma}\label{approximate-functional-equation}
Let $G(s)$ be a fixed even entire holomorphic function of rapid decay in vertical strips, satisfying $G(0) = 1$. Then we have for any $\xi \in \mathbb{C}$ that
\begin{multline*}
\sigma_{\xi}(n, \psi) = \frac{\psi(n)}{\tau(\psi)} \sum_{\ell = 1}^{\infty} \frac{1}{\ell^{1-\xi}} \ 
\sideset{}{^{\ast}} \sum_{h \tpmod{\ell q}} \psi(h) e_{\ell q}(hn) f_{\xi} \left(\frac{\ell}{\sqrt{n}} \right)\\
 + \frac{n^{\xi}}{\tau(\psi)} \sum_{\ell = 1}^{\infty} \frac{1}{\ell^{1+\xi}} \ \sideset{}{^{\ast}} 
\sum_{h \tpmod{\ell q}} \psi(h) e_{\ell q}(hn) f_{-\xi} \left(\frac{\ell}{\sqrt{n}} \right),
\end{multline*}
where for $a > |\Re(\xi)|$,
\begin{equation*}
f_{\xi}(x) \coloneqq \int_{(a)} x^{-w} L(1-\xi+w, \psi) \frac{G(w)}{w} \frac{dw}{2\pi i}.
\end{equation*}
\end{lemma}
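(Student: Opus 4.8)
The plan is to derive the approximate functional equation from the twisted Ramanujan expansion of Lemma~\ref{Ramanujan} by a standard contour-shift argument. Concretely, I would start from the identity~\eqref{sigma}, namely $\sigma_{\xi}(n,\psi) = \psi(n) \sigma_{-\xi}(n,\overline{\psi}) n^{\xi}$, together with the Dirichlet series representation $\sum_{n} \sigma_{-\xi}(n,\overline{\psi}) n^{-s} = \zeta(s) L(s+\xi,\overline{\psi})$ for $\Re(s)$ large. Insert the Mellin kernel $G(w)/w$ and consider the integral
\begin{equation*}
I_{\xi}(n) = \int_{(a)} L(1-\xi+w,\psi) \, (\text{Dirichlet-series data at } n) \, \frac{G(w)}{w} \frac{dw}{2\pi i},
\end{equation*}
where the $\ell$-sum in $f_{\xi}$ is exactly $\int_{(a)} x^{-w} L(1-\xi+w,\psi) G(w)/w \, dw/(2\pi i)$ with $x = \ell/\sqrt{n}$. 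The two terms in the claimed formula, one with $\ell^{-(1-\xi)} f_{\xi}$ and one with $n^{\xi}\ell^{-(1+\xi)} f_{-\xi}$, are the signature of having shifted a contour past the pole at $w = 0$ (contributing $G(0) = 1$ times the ``diagonal'') and picked up a functional-equation reflection $w \mapsto -w$ for the complementary piece; the evenness of $G$ is what makes the reflected integral again of the same shape.

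The key steps, in order, are as follows. First, I would apply the twisted Ramanujan expansion~\eqref{Ramanujan-expansion} to rewrite $\sigma_{-\xi}(n,\overline{\psi})$ (or directly $\sigma_{\xi}(n,\psi)$ after using~\eqref{sigma}) as a sum over $\ell$ and a Ramanujan-type sum $\sideset{}{^{\ast}}\sum_{h} \psi(h) e_{\ell q}(hn)$, with a coefficient involving $L(1+\xi,\overline{\psi})/\tau(\overline{\psi})$. Second, I would insert a redundant factor $\int_{(a)} (\ell/\sqrt{n})^{-w} \, G(w)/w \, dw/(2\pi i)$ evaluated in such a way that at $w = 0$ it collapses to $1$; more precisely, I would express $\sigma_{\xi}(n,\psi)$ as a contour integral whose integrand is the Dirichlet coefficient times $G(w)/w$, move the line of integration from $\Re(w) = a$ to $\Re(w) = -a$, cross the simple pole at $w = 0$ (this produces one of the two main terms, after recognising the residue via Lemma~\ref{Ramanujan}), and then on the shifted line apply the functional equation of $\zeta$ and $L(\cdot,\psi)$ — equivalently the functional equation of the Estermann zeta function in Theorem~\ref{functional-equation-1} in the degenerate rational argument, or simply the symmetric functional equation of $\zeta(s)L(s+\xi,\overline{\psi})$ — to flip $w \mapsto -w$ and return to a line of positive real part. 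Third, I would match the Gauss-sum factors: the reflection turns $\tau(\overline{\psi})$ into $\tau(\psi)$ via $\tau(\psi)\tau(\overline{\psi}) = \psi(-1) q$ and $\overline{\psi}(h) \mapsto \psi(h)$, and turns $L(1+\xi,\overline{\psi})$ into $L(1-\xi+w,\psi)$-type factors absorbed into $f_{\mp\xi}$. Fourth, I would read off that the pole term is precisely $\frac{\psi(n)}{\tau(\psi)}\sum_{\ell} \ell^{-(1-\xi)} \sideset{}{^{\ast}}\sum_{h} \psi(h) e_{\ell q}(hn) f_{\xi}(\ell/\sqrt{n})$ and the shifted integral is the companion term with $f_{-\xi}$ and the extra $n^{\xi}$, finishing the identity. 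Throughout, convergence and the legitimacy of interchanging the $\ell$-sum with the $w$-integral are guaranteed by the rapid decay of $G$ in vertical strips together with the polynomial growth of $L(1-\xi+w,\psi)$ on vertical lines.

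The main obstacle I anticipate is bookkeeping rather than conceptual: getting all the $q$-dependent and $\psi$-dependent constants (the Gauss sums $\tau(\psi)$, $\tau(\overline{\psi})$, the sign $\psi(-1)$, and the conductor powers $q^{\pm\xi}$) to cancel correctly when the functional equation is applied, and confirming that the ``dual'' Ramanujan-type sum produced after reflection is again $\sideset{}{^{\ast}}\sum_{h\tpmod{\ell q}} \psi(h) e_{\ell q}(hn)$ rather than some twisted variant with a different modulus. Here the primitivity of $\psi$ is essential — it is exactly what the direct proof of Lemma~\ref{Ramanujan} used to collapse the $d\mid\ell$ Möbius sum — so I would make sure the same primitivity input is invoked when identifying the reflected sum. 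A secondary technical point is justifying the absence of any pole of $L(1-\xi+w,\psi)$ in the strip $-a \le \Re(w) \le a$; since $\psi$ is a nontrivial primitive character, $L(s,\psi)$ is entire, so only the $w = 0$ pole of $G(w)/w$ contributes, and no extra residues appear. Once these constants are pinned down, the two-term shape with the stated $f_{\pm\xi}$ follows immediately.
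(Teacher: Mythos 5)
Your skeleton is right — a contour integral against $G(w)/w$, a shift from $\Re(w)=a$ to $\Re(w)=-a$ past the pole at $w=0$, a reflection $w\mapsto -w$, and insertion of the twisted Ramanujan expansion — and this matches the paper. The gap is in the reflection mechanism you reach for. You propose to apply ``the functional equation of $\zeta$ and $L(\cdot,\psi)$ --- equivalently the functional equation of the Estermann zeta function in Theorem~\ref{functional-equation-1}'' on the shifted line. Any of these analytic functional equations would introduce archimedean gamma factors and root-number phases, and none of those appear in the statement of Lemma~\ref{approximate-functional-equation}: the final identity is a clean two-term expansion with no $\Gamma$-factors in $f_{\pm\xi}$. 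What the proof actually uses at this step is the \emph{elementary} divisor-sum symmetry~\eqref{sigma}, namely $\sigma_{\xi}(n,\psi)\,n^{-\xi}=\psi(n)\,\sigma_{-\xi}(n,\overline{\psi})$, applied at the level of arithmetic coefficients inside the integrand. Concretely: after the change of variables $w\mapsto -w$ (using evenness of $G$), the $\Re(w)=-a$ integral of $\sigma_{\xi-w}(n,\psi)\,n^{w/2}\,G(w)/w$ returns to $\Re(w)=a$ as an integral involving $\sigma_{\xi+w}(n,\psi)\,n^{-w/2}$, and~\eqref{sigma} with $\xi$ replaced by $\xi+w$ converts this to $\psi(n)\,n^{\xi}\,\sigma_{-\xi-w}(n,\overline{\psi})\,n^{w/2}$ --- producing exactly the prefactor $\psi(n)\,n^{\xi}$ and the companion term with $f_{-\xi}$. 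The Gauss-sum bookkeeping you anticipate (via $\tau(\psi)\tau(\overline{\psi})=\psi(-1)q$, conductor powers, root numbers) is an artefact of the wrong tool and never arises. Your phrase ``the symmetric functional equation of $\zeta(s)L(s+\xi,\overline{\psi})$'' could charitably be read as alluding to the $\xi\mapsto-\xi$ symmetry encoded in~\eqref{sigma}, but as written it is conflated with the $s\mapsto 1-s$ functional equation, which does not work here.

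A second, smaller inaccuracy: you say the residue at $w=0$ ``produces one of the two main terms, after recognising the residue via Lemma~\ref{Ramanujan}.'' In fact the proof starts from $\int_{(a)}\sigma_{\xi-w}(n,\psi)\,n^{w/2}\,G(w)/w\,\frac{dw}{2\pi i}$, whose residue at $w=0$ is simply $\sigma_{\xi}(n,\psi)$ since $G(0)=1$; this is the \emph{left}-hand side of the identity being proved, not one of its two right-hand pieces. The two main terms come from the two surviving line integrals on $\Re(w)=a$, each unfolded with the twisted Ramanujan expansion of Lemma~\ref{Ramanujan} (valid for $\Re(w)$ large, with $\psi$ or $\overline{\psi}$ as appropriate) and then matched against the definition of $f_{\pm\xi}$.
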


\begin{proof}
Shifting the contour, we derive
\begin{equation*}
\int_{(a)} \sigma_{\xi-w}(n, \psi) n^{\frac{w}{2}} \frac{G(w)}{w} \frac{dw}{2\pi i} 
 = \sigma_{\xi}(n, \psi)+\int_{(-a)} \sigma_{\xi-w}(n, \psi) n^{\frac{w}{2}} \frac{G(w)}{w} \frac{dw}{2\pi i}.
\end{equation*}
Making a change of variables $w \mapsto -w$ gives
\begin{equation*}
\sigma_{\xi}(n, \psi) = \int_{(a)} \sigma_{\xi-w}(n, \psi) n^{\frac{w}{2}} \frac{G(w)}{w} \frac{dw}{2\pi i}
 + \psi(n) n^{\xi} \int_{(a)} \sigma_{-\xi-w}(n, \overline{\psi}) n^{\frac{w}{2}} \frac{G(w)}{w} \frac{dw}{2\pi i}.
\end{equation*}
By Lemma~\ref{Ramanujan}, this matches the desired claim.
\end{proof}

\subsection{Hecke Relation}
We record the Hecke relation for the twisted divisor function $\sigma_{\xi}(n, \psi)$.
\begin{lemma}\label{Hecke-multiplicativity}
The twisted divisor function~\eqref{twisted-divisor-function} enjoys the Hecke relation
\begin{equation*}
\sigma_{\xi}(mn, \psi) = \sum_{c \mid (m, n)} \mu(c) \psi(c) c^{\xi} \sigma_{\xi} \left(\frac{m}{c}, \psi \right) \sigma_{\xi} \left(\frac{n}{c}, \psi \right).
\end{equation*}
\end{lemma}

\begin{proof}
If $m = p_{1}^{k_{1}} p_{2}^{k_{2}} \cdots p_{r}^{k_{r}}$, $n = p_{1}^{\ell_{1}} p_{2}^{\ell_{2}} \cdots p_{r}^{\ell_{r}}$, and the claim holds for prime powers, then
\begin{align}\label{prime-power}
\begin{split}
\sigma_{\xi}(mn, \psi) &= \sigma_{\xi}(p_{1}^{k_{1}+\ell_{1}}, \psi) 
\sigma_{\xi}(p_{2}^{k_{2}+\ell_{2}}, \psi) \cdots \sigma_{\xi}(p_{r}^{k_{r}+\ell_{r}}, \psi)\\
& = \prod_{i = 1}^{r} \sum_{j = 0}^{\min(k_{i}, \ell_{i})} \mu(p_{i}^{j}) \psi(p_{i}^{j})
p_{i}^{j \xi} \sigma_{\xi}(p_{i}^{k_{i}-j}) \sigma_{\xi}(p_{i}^{\ell_{i}-j})\\
& = \sum_{c \mid (m, n)} \mu(c) \psi(c) c^{w} \sigma_{\xi} \left(\frac{m}{c} \right) \sigma_{\xi} \left(\frac{n}{c} \right).
\end{split}
\end{align}
By multiplicativity, we assume that $m = p^{k}$ and $n = p^{\ell}$. The claim follows if $\min(k, \ell) = 0$, thus we focus on the case where both are at least $1$. If $X = \psi(p) p^{\xi}$, then the right-hand side of~\eqref{prime-power} is equal to
\begin{align*}
&\sigma_{\xi}(p^{k}, \psi) \sigma_{\xi}(p^{\ell}, \psi)-\psi(p) p^{\xi} \sigma_{\xi}(p^{k-1}, \psi) \sigma_{\xi}(p^{\ell-1}, \psi)\\
& \quad = (1+X+\cdots+X^{k})(1+X+\cdots+X^{\ell})-X(1+X+\cdots+X^{k-1})(1+X+\cdots+X^{\ell-1})\\
& \quad = \frac{X^{k+\ell+1}-1}{X-1} = (1+\psi(p) p^{\xi}+\psi(p)^{2} p^{2\xi}+\cdots+\psi(p)^{k+\ell} p^{(k+\ell)\xi})
 = \sigma_{\xi}(p^{k+\ell}, \psi),
\end{align*}
as required. The proof of Lemma~\ref{Hecke-multiplicativity} is now complete.
\end{proof}

\section{The Kuznetsov Formula}
We formulate an asymmetric trace formula relating sums of Kloosterman sums to~Fourier coefficients of automorphic forms in the sense that the spectral and geometric sides possesses a quite different shape. We begin with an explanation of the Fourier expansions of Maa{\ss}~and holomorphic forms as well as Eisenstein series in the same manner as in~\cite[Section 3]{BlomerKhan2019}, and refer to this source for further details. Let $N \in \mathbb{N}$, and let
\begin{equation*}
\nu(N) \coloneqq \prod_{p \mid N} \left(1+\frac{1}{p} \right).
\end{equation*}
We equip $\Gamma_{0}(N) \backslash \mathbb{H}$ with the Petersson inner product
\begin{equation*}
\langle f, g \rangle \coloneqq \int_{\Gamma_{0}(N) \backslash \mathbb{H}} f(z) \overline{g(z)} \frac{dx dy}{y^{2}}.
\end{equation*}

The cuspidal spectrum is parametrised by pairs of $\Gamma_{0}(N)$-normalised newforms $f$ of level $N_{0} \mid N$ as well as integers $M \mid \frac{N}{N_{0}}$. This comes from orthonormalising $\{z \mapsto f(Mz): M \mid \frac{N}{N_{0}} \}$ by Gram--Schmidt. If $f$ is Maa{\ss}, then we write the Fourier expansion of the pair $(f, M)$ as
\begin{equation*}
\sqrt{2 \cosh(\pi t) y} \sum_{n \ne 0} \rho_{f, M, N}(n) K_{it}(2\pi |n|y) e(nx),
\end{equation*}
where
\begin{equation}\label{Fourier-coefficients}
\rho_{f, M, N}(n) = \frac{1}{L(1, \operatorname{ad} f)^{\frac{1}{2}}(MN \nu(N))^{\frac{1}{2}}} \prod_{p \mid N_{0}} 
\left(1-\frac{1}{p^{2}} \right)^{\frac{1}{2}} \sum_{d \mid (M, n)} d \xi_{f}(M, d) \lambda_{f} \left(\frac{n}{d} \right)
\end{equation}
for $n \in \mathbb{N}$ with the convention $\lambda_{f}(n) = 0$ for $n \not \in \mathbb{Z}$. Here $\xi_{f}(M, d)$ is a certain multiplicative function defined in~\cite[Section 3.2]{BlomerKhan2019} obeying
\begin{equation}\label{xi}
\xi_{f}(M, d) \ll_{\varepsilon} M^{\varepsilon} \left(\frac{M}{d} \right)^{\theta}.
\end{equation}
Furthermore, we define the adjoint square $L$-function by
\begin{equation}\label{adjoint}
L(s, \operatorname{ad} f) \coloneqq \zeta^{(N)}(2s) \sum_{n = 1}^{\infty} \frac{\lambda_{f}(n^{2})}{n^{s}},
\end{equation}
which may differ from the corresponding Langlands $L$-function by finitely many Euler factors. For $-n \in \mathbb{N}$, we have $\rho_{f, M, N}(n) = \varepsilon_{f} \rho_{f, M, N}(-n)$ if $f$ is Maa{\ss} of parity $\varepsilon_{f} \in \{\pm 1 \}$.

If $f$ is holomorphic of weight $k_{f}$ and level $N_{0} \mid N$, then we write the Fourier expansion of the pair $(f, M)$ as
\begin{equation*}
\left(\frac{2\pi^{2}}{\Gamma(k)} \right)^{\frac{1}{2}} \sum_{n \geq 1} \rho_{f, M, N}(n)(4\pi n)^{\frac{k_{f}-1}{2}} e(nz).
\end{equation*}
Then~\eqref{Fourier-coefficients} remains true for $n \in \mathbb{N}$, and we define $\rho_{f, M, N}(n) = 0$ for $-n \in \mathbb{N}$.

Unitary Eisenstein series for $\Gamma_{0}(N)$ are parametrised by a continuous parameter $s = \frac{1}{2}+it$, $t \in \mathbb{R}$, and pairs $(\chi, M)$, where $\chi$ is a primitive Dirichlet character of conductor $c_{\chi}$ and $M \in \mathbb{N}$ satisfies $c_{\chi}^{2} \mid M \mid N$. We define
\begin{equation*}
\widetilde{\mathfrak{n}}_{N}(M) \coloneqq \left(\prod_{\substack{p \mid N \\ p \nmid (M, \frac{N}{M})}} 
\frac{p}{p+1} \prod_{p \mid (M, \frac{N}{M})} \frac{p-1}{p+1} \right)^{\frac{1}{2}},
\end{equation*}
and write
\begin{equation*}
M = c_{\chi} M_{1} M_{2}, \qquad (M_{2}, c_{\chi}) = 1, \qquad M_{1} \mid c_{\chi}^{\infty}.
\end{equation*}
The Fourier coefficients of unitary Eisenstein series associated to the data $(N, M, t, \chi)$ are
\begin{equation*}
\rho_{\chi, M, N}(n, t) = \frac{C(\chi, M, t)|n|^{it}}{(N \nu(N))^{\frac{1}{2}} \widetilde{\mathfrak{n}}_{N}(M) L^{N}(1+2it, \chi^{2})} 
\left(\frac{M_{1}}{M_{2}} \right)^{\frac{1}{2}} 
\sum_{\delta \mid M_{2}} \delta \mu \left(\frac{M_{2}}{\delta} \right) \overline{\chi}(\delta) 
\sum_{\substack{cM_{1} \delta f = n \\ (c, \frac{N}{M}) = 1}} \frac{\chi(c) \overline{\chi}(f)}{c^{2it}}
\end{equation*}
for a constant $C(\chi, M, t)$ of modulus $1$.

\subsection{Kloosterman Summation Formula}
For $x > 0$, we introduce the integral kernels
\begin{align*}
\mathcal{J}^{+}(x, t) &\coloneqq \frac{\pi i}{\sinh(\pi t)}(J_{2it}(x)-J_{-2it}(x)),\\
\mathcal{J}^{-}(x, t) &\coloneqq \frac{\pi i}{\sinh(\pi t)}(I_{2it}(x)-I_{-2it}(x)) = 4\cosh(\pi t) K_{2it}(x),\\
\mathcal{J}^{\text{hol}}(x, k) &\coloneqq 2\pi i^{k} J_{k-1}(x) = \mathcal{J}^{+} \left(x, \frac{k-1}{2i} \right), \qquad k \in 2\mathbb{N}.
\end{align*}
If $H \in C^{3}((0, \infty))$ satisfies $x^{j} H^{(j)}(x) \ll \min(x, x^{-\frac{3}{2}})$ for $0 \leq j \leq 3$, then we define
\begin{equation*}
\mathscr{L}^{\Diamond} H \coloneqq \int_{0}^{\infty} \mathcal{J}^{\Diamond}(x, \cdot) H(x) \frac{dx}{x},
\end{equation*}
where $\Diamond \in \{+, -, \text{hol} \}$. Let
\begin{equation}\label{O}
\mathcal{O}_{q}(m, \pm n; H) \coloneqq \sum_{q \mid c} \frac{S(m, \pm n; c)}{c} H \left(\frac{4\pi \sqrt{mn}}{c} \right).
\end{equation}

\begin{theorem}[{Blomer et al.~\cite[Equation (2.14)]{BlomerHumphriesKhanMilinovich2020}}]\label{Kuznetsov-formula}
Let $x^{j} H^{(j)}(x) \ll \min(x, x^{-\frac{3}{2}})$ for $0 \leq j \leq 3$. Then we have for any $m, n, q \in \mathbb{N}$ that
\textup{
\begin{equation}\label{Kuznetsov}
\mathcal{O}_{q}(m, \pm n; H) = \mathcal{A}_{q}^{\text{Maa{\ss}}}(m, \pm n; \mathscr{L}^{\pm} H)
 + \mathcal{A}_{q}^{\text{Eis}}(m, \pm n; \mathscr{L}^{\pm} H)
 + \delta_{\pm = +} \mathcal{A}_{q}^{\text{hol}}(m, n; \mathscr{L}^{\text{hol}} H),
\end{equation}
}where
\textup{
\begin{align*}
\mathcal{A}_{q}^{\text{Maa{\ss}}}(m, n; h) &\coloneqq \sum_{q_{0} M \mid q} 
\sum_{f \in \mathcal{B}^{\ast}(q_{0})} 
\rho_{f, M, q}(m) \overline{\rho_{f, M, q}(n)} h(t_{f}),\\
\mathcal{A}_{q}^{\text{Eis}}(m, n; h) &\coloneqq \sum_{c_{\chi}^{2} \mid M \mid q} 
\int_{\mathbb{R}} \rho_{\chi, M, q}(m, t) \overline{\rho_{\chi, M, q}(n, t)} h(t) \frac{dt}{2\pi},\\
\mathcal{A}_{q}^{\text{hol}}(m, n; h) &\coloneqq \sum_{q_{0} M \mid q} 
\sum_{f \in \mathcal{B}_{\text{hol}}^{\ast}(q_{0})} 
\rho_{f, M, q}(m) \overline{\rho_{f, M, q}(n)} h(k_{f}).
\end{align*}
}
\end{theorem}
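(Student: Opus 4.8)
The statement is the Kloosterman summation formula of Blomer et al.\ for level $q$ and trivial central character, and the plan is to indicate the mechanism by which it follows from the spectral theory on $\Gamma_{0}(q) \backslash \mathbb{H}$ together with the explicit Fourier expansions recorded in \S\ref{Fourier-expansions}; the complete bookkeeping for nonsquarefree $q$ is carried out in \cite{BlomerKhan2019,BlomerHumphriesKhanMilinovich2020}, and it suffices to check that the normalisations above agree with theirs and to invoke their result. Below I describe the underlying argument.

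First I would attach to the cusp $\infty$ of $\Gamma_{0}(q)$ a Poincar\'e series of index $m$: in the Maa{\ss} cases $P_{m}(z) = \sum_{\gamma \in \Gamma_{\infty} \backslash \Gamma_{0}(q)} \phi_{m}(\Im \gamma z) e(m \Re \gamma z)$ for a weight function $\phi_{m}$ supported in $(0, \infty)$ whose profile encodes $H$, and in the holomorphic case the classical weight-$k$ Poincar\'e series of index $m$. Because the scaling matrix of $\infty$ is the identity, the Bruhat decomposition of $\Gamma_{0}(q)$ involves only moduli $c$ with $q \mid c$; this is exactly why the geometric side comes out as $\mathcal{O}_{q}(m, \pm n; \cdot)$ in \eqref{O} rather than a sum over pairs of cusps.

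Next I would evaluate the Petersson pairing $\langle P_{m}, P_{n} \rangle$ in two ways. Unfolding the pairing against the Fourier expansion at $\infty$ of the second Poincar\'e series yields a diagonal term proportional to $\delta_{m = n}$ plus the off-diagonal sum $\sum_{q \mid c} S(m, \pm n; c) c^{-1}$ weighted by a Bessel-type integral of $\phi_{m}$; choosing $\phi_{m}$ so that this integral reproduces $H(4\pi\sqrt{mn}/c)$ gives precisely $\mathcal{O}_{q}(m, \pm n; H)$. On the other hand, expanding $P_{m}$ spectrally in the orthonormal basis $\{(f, M) : q_{0} M \mid q,\ f \in \mathcal{B}^{\ast}(q_{0})\}$ of Maa{\ss} cusp forms, in the continuous spectrum parametrised by the pairs $(\chi, M)$ with $c_{\chi}^{2} \mid M \mid q$, and in the holomorphic cusp forms (for the $\mathrm{hol}$ case), the projection onto each basis vector equals $\overline{\rho_{f, M, q}(m)}$ (resp.\ $\overline{\rho_{\chi, M, q}(m, t)}$) times a Mellin--Bessel transform of $\phi_{m}$. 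Matching these transforms with the operators $\mathscr{L}^{+}$, $\mathscr{L}^{-}$, $\mathscr{L}^{\mathrm{hol}}$ is the content of the Sears--Titchmarsh inversion for the $J$-Bessel kernel $\mathcal{J}^{+}$, its analytic continuation to the $I$/$K$-Bessel kernel $\mathcal{J}^{-}$, and the classical Petersson-formula identity for $\mathcal{J}^{\mathrm{hol}}$. Equating the two evaluations for all $m, n$ and cancelling the diagonal term, which matches on both sides, produces \eqref{Kuznetsov}.

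The delicate point -- and the reason to lean on \cite{BlomerKhan2019,BlomerHumphriesKhanMilinovich2020} rather than reprove everything from scratch -- is that for nonsquarefree $q$ the naive family $\{z \mapsto f(Mz)\}$ is not orthonormal and the Eisenstein series have a more intricate Fourier expansion at $\infty$; one must work with the Gram--Schmidt-corrected coefficients \eqref{Fourier-coefficients}, carrying the arithmetic factors $\nu(N)$, $\xi_{f}(M, d)$ and $\tilde{\mathfrak{n}}_{N}(M)$ and the unit constant $C(\chi, M, t)$, and verify that the spectral expansion of $P_{m}$ at $\infty$ still closes up cleanly in these terms. That verification -- essentially a computation with Hecke operators and with the explicit action of the divisors $M \mid q/q_{0}$ on the Fourier expansion -- is the main obstacle; it is precisely what is executed in \cite[\S 3]{BlomerKhan2019} and \cite{BlomerHumphriesKhanMilinovich2020}, so the cleanest route is to confirm that the conventions of \S\ref{Fourier-expansions} match theirs and cite their theorem.
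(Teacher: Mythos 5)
The paper gives no proof of this statement --- it is quoted verbatim from \cite{BlomerHumphriesKhanMilinovich2020} and used as a black box --- and your sketch of the Poincar\'e-series/unfolding and Sears--Titchmarsh mechanism, followed by deferring the nonsquarefree bookkeeping to \cite{BlomerKhan2019,BlomerHumphriesKhanMilinovich2020}, amounts to the same approach. One small imprecision: the delta term does not ``cancel on both sides''; in this Kloosterman-to-spectral direction, with the test function $H$ placed on the geometric side and inverted via Sears--Titchmarsh, the delta contribution is simply absent from the outset, which is exactly the remark the paper makes immediately after the theorem.
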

The above formulation mimics the Kuznetsov formula in the book of Motohashi~\cite{Motohashi1997-2}. Theorem~\ref{Kuznetsov-formula} can be thought of as a Kloosterman summation formula since it expresses sums of Kloosterman sums weighted by a fixed test function in terms of sums of automorphic~forms weighted by the corresponding integral transforms.

\section{Proof of Theorem~\ref{main}}
We embark on the proof of Theorem~\ref{main}. Recall Convention~\ref{convention} and the definition~\eqref{second-moment}.

\subsection{Initial Manipulations}
Let $\mathcal{R}_{4}^{+}$ (resp. $\mathcal{R}_{4}^{-}$) be the subdomain of $\mathbb{C}^{4}$ where all four parameters have real~parts greater than (resp. less than) one, namely
\begin{align*}
\mathcal{R}_{4}^{+} &\coloneqq \{(s_{1}, s_{2}, s_{3}, s_{4}) \in \mathbb{C}^{4}: \Re(s_{i}) > 1, \, 1 \leq i \leq 4 \},\\
\mathcal{R}_{4}^{-} &\coloneqq \{(s_{1}, s_{2}, s_{3}, s_{4}) \in \mathbb{C}^{4}: \Re(s_{i}) < 1, \, 1 \leq i \leq 4 \}.
\end{align*}
We write $\mathcal{M}_{2} \coloneqq \mathcal{M}_{2}(\texttt{s}; g; \psi) = \mathcal{M}_{2}(s_{1}, s_{2}, s_{3}, s_{4}; g; \psi)$ for a vector $\texttt{s} = (s_{1}, s_{2}, s_{3}, s_{4})$. Moving the contour appropriately, one sees that $\mathcal{M}_{2}$ is meromorphic over $\mathbb{C}$. Taking $(s_{1}, s_{2}, s_{3}, s_{4}) \in \mathcal{R}_{4}^{-} \cap \mathcal{B}_{4}$~and shifting the contour back to the original, we verify the meromorphic continuation to the domain $\mathcal{R}_{4}^{-} \cap \mathcal{B}_{4}$:
\begin{align*}
\begin{split}
\mathcal{M}_{2}
& = \int_{\mathbb{R}} \zeta(s_{1}+it) L(s_{2}+it, \psi) \zeta(s_{3}-it) L(s_{4}-it, \overline{\psi}) g(t) dt\\
& \quad + 2\pi \zeta(s_{1}+s_{3}-1) L(1-s_{1}+s_{2}, \psi) L(s_{1}+s_{4}-1, \overline{\psi}) g((s_{1}-1)i)\\
& \quad + 2\pi \zeta(s_{1}+s_{3}-1) L(s_{2}+s_{3}-1, \psi) L(1-s_{3}+s_{4}, \overline{\psi}) g((1-s_{3})i).
\end{split}
\end{align*}

To continue $\mathcal{M}_{2}$ from $\mathcal{R}_{4}^{+}$ to the vicinity of $\texttt{s} = (\frac{1}{2}, \frac{1}{2}, \frac{1}{2}, \frac{1}{2})$, we open up the Dirichlet series in the region of absolute convergence to recast
\begin{align*}
\mathcal{M}_{2} &= \sum_{n_{1}, n_{2}, n_{3}, n_{4} = 1}^{\infty} 
\frac{\psi(n_{2}) \overline{\psi(n_{4})}}{n_{1}^{s_{1}} n_{2}^{s_{2}} n_{3}^{s_{3}} n_{4}^{s_{4}}} 
\int_{\mathbb{R}} g(t) \left(\frac{n_{1} n_{2}}{n_{3} n_{4}} \right)^{-it} dt\\
& = \sum_{m, n = 1}^{\infty} \frac{\sigma_{s_{1}-s_{2}}(m, \psi) \sigma_{s_{3}-s_{4}}(n, \overline{\psi})}{m^{s_{1}} n^{s_{3}}} 
\hat{g} \left(\frac{1}{2\pi} \log \frac{m}{n} \right),
\end{align*}
where $\hat{g}$ denotes the Fourier transform
\begin{equation*}
\hat{g}(\xi) \coloneqq \int_{\mathbb{R}} g(t) e(-\xi t) dt.
\end{equation*}
We decompose
\begin{equation}\label{Atkinson-dissection}
\mathcal{M}_{2} = \mathcal{D}_{2}+\mathcal{OD}_{2}^{\dagger}+\mathcal{OD}_{2}^{\ddagger},
\end{equation}
where $\mathcal{D}_{2}$ is the diagonal contribution associated to $m = n$, while $\mathcal{OD}_{2}^{\dagger}$ (resp. $\mathcal{OD}_{2}^{\ddagger}$) is the off-diagonal contribution involving terms with $m > n$ (resp. $m < n$). The case where $m < n$ is symmetrical to the case where $m > n$, thus
\begin{equation*}
\mathcal{OD}_{2}^{\ddagger}(s_{1}, s_{2}, s_{3}, s_{4}; g; \psi)
 = \overline{\mathcal{OD}_{2}^{\dagger}(\overline{s_{3}}, \overline{s_{4}}, \overline{s_{1}}, \overline{s_{2}}; g; \psi)}.
\end{equation*}

\begin{lemma}\label{lem:Ramanujan}
Keep the notation as above. Then we have that
\begin{equation*}
\mathcal{D}_{2} = \hat{g}(0) \frac{\zeta(s_{1}+s_{3}) \zeta^{q}(s_{2}+s_{4}) 
L(s_{2}+s_{3}, \psi) L(s_{1}+s_{4}, \overline{\psi})}{\zeta^{q}(s_{1}+s_{2}+s_{3}+s_{4})}.
\end{equation*}
\end{lemma}

\begin{proof}
We follow~\cite{Motohashi2018} \textit{mutatis mutandis}. By definition,
\begin{equation*}
\mathcal{D}_{2} = \hat{g}(0) \sum_{m = 1}^{\infty} \frac{\sigma_{s_{1}-s_{2}}(m, \psi) \sigma_{s_{3}-s_{4}}(m, \overline{\psi})}{m^{s_{1}+s_{3}}},
\end{equation*}
and the Dirichlet coefficient is multiplicative in $m$. It is then convenient to write
\begin{equation*}
\sigma_{s_{1}-s_{2}}(m, \psi) \sigma_{s_{3}-s_{4}}(m, \overline{\psi})
 = \sum_{[u, v] \mid m} \psi(u) \overline{\psi}(v) u^{s_{1}-s_{2}} v^{s_{3}-s_{4}},
\end{equation*}
where $[u, v]$ denotes the least common multiple. Hence, the Dirichlet series in question equals
\begin{equation}\label{greatest-common-divisor}
\zeta(s_{1}+s_{3}) \sum_{u, v = 1}^{\infty} \psi(u) \overline{\psi}(v) \frac{u^{s_{1}-s_{2}} v^{s_{3}-s_{4}}}{[u, v]^{s_{1}+s_{3}}}
 = \zeta(s_{1}+s_{3}) \sum_{u, v = 1}^{\infty} \psi(u) \overline{\psi}(v) \frac{(u, v)^{s_{1}+s_{3}}}{u^{s_{2}+s_{3}} v^{s_{1}+s_{4}}},
\end{equation}
where $(u, v)$ denotes the greatest common divisor. Because M\"{o}bius inversion gives
\begin{equation*}
\sum_{f \mid g} \eta_{s}(f) = g^{s}, \qquad \eta_{s}(m) = \sum_{d \mid m} \mu(d) \left(\frac{m}{d} \right)^{s},
\end{equation*}
the right-hand side of~\eqref{greatest-common-divisor} becomes
\begin{multline*}
\zeta(s_{1}+s_{3}) \sum_{u, v = 1}^{\infty} \frac{\psi(u) \overline{\psi}(v)}{u^{s_{2}+s_{3}} v^{s_{1}+s_{4}}} \sum_{d \mid (u, v)} \eta_{s_{1}+s_{3}}(d)\\
 = \zeta(s_{1}+s_{3}) L(s_{2}+s_{3}, \psi) L(s_{1}+s_{4}, \overline{\psi}) \sum_{(d, q) = 1} \frac{\eta_{s_{1}+s_{3}}(d)}{d^{s_{1}+s_{2}+s_{3}+s_{4}}}.
\end{multline*}
The proof of Lemma~\ref{lem:Ramanujan} is now complete.
\end{proof}

\subsection{Off-Diagonal Contribution}
The analysis of the off-diagonal terms requires the full machinery of the spectral theory of automorphic forms associated to congruence subgroups. We handle the second term~in~\eqref{Atkinson-dissection}, and the contribution of the third term will be combined subsequently. Given $\alpha > 1$, let
\begin{equation*}
G(y, s) \coloneqq (1+y)^{-s} \hat{g} \left(\frac{1}{2\pi} \log(1+y) \right)
 = \int_{(\alpha)} \mathring{g}(\tau, s) y^{-\tau} \frac{d\tau}{2\pi i},
\end{equation*}
where $\mathring{g}$ denotes the Mellin transform
\begin{equation}\label{Mellin-transform}
\mathring{g}(\tau, s) \coloneqq \int_{0}^{\infty} y^{\tau-1}(1+y)^{-s} \hat{g} \left(\frac{1}{2\pi} \log(1+y) \right) dy
 = \Gamma(\tau) \int_{\mathbb{R}} \frac{\Gamma(s-\tau+it)}{\Gamma(s+it)} g(t) dt,
\end{equation}
provided $\Re(s) > \Re(\tau) > 0$.
\begin{lemma}\label{entire}
As a function of two complex variables, $\frac{\mathring{g}(\tau, s)}{\Gamma(\tau)}$ is entire in $\tau$ and $s$, and $\mathring{g}(\tau, s)$~is of rapid decay in $\tau$ as soon as $s$ and $\Re(\tau)$ are bounded.
\end{lemma}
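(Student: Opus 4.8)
The plan is to carry out the meromorphic continuation through the second representation in~\eqref{Mellin-transform}, which after division by $\Gamma(\tau)$ reads
\begin{equation*}
\frac{\mathring{g}(\tau,s)}{\Gamma(\tau)} = \int_{\R} \frac{\Gamma(s-\tau+it)}{\Gamma(s+it)}\, g(t)\, dt.
\end{equation*}
Since $1/\Gamma$ is entire, the integrand is jointly holomorphic in $(\tau,s)$ away from the simple poles of $\Gamma(s-\tau+it)$, located at $t=i(\tau-s-n)$ with $n\in\Z_{\geqslant 0}$. Over any bounded region of $(\tau,s)$ only finitely many of these poles come near the real axis, and their real parts stay bounded; since $g$ is, by Convention~\ref{convention}, holomorphic and of rapid decay on the strip $|\Im t|\leqslant A$, I would deform the $t$-contour from $\R$ to a nearby path inside that strip which separates it from those finitely many poles. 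The deformed integral converges absolutely, and Morera's theorem (or differentiation under the integral sign) in each of $\tau$ and $s$ shows it is holomorphic on the region; since the region is arbitrary, this exhibits $\mathring{g}(\tau,s)/\Gamma(\tau)$ as an entire function on $\C^{2}$. On $\Re(s)>\Re(\tau)>0$ no deformation is needed and the integral coincides with the beta-integral evaluation underlying the second equality of~\eqref{Mellin-transform}, hence with the original Mellin transform~\eqref{G-function}; by uniqueness of analytic continuation the extension is the one claimed.

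For the decay, fix $s$ and $\sigma=\Re(\tau)$ in a bounded set, put $\tau=\sigma+iT$, and let $|T|\to\infty$. By Stirling's formula, $|\Gamma(z)|\asymp(1+|\Im z|)^{\Re z-1/2}e^{-\pi|\Im z|/2}$ uniformly for $\Re z$ bounded, the product $|\Gamma(\tau)\Gamma(s-\tau+it)/\Gamma(s+it)|$ carries the exponential weight
\begin{equation*}
\exp\!\left(-\tfrac{\pi}{2}\bigl(|T|+|\Im(s)-T+t|-|\Im(s)+t|\bigr)\right),
\end{equation*}
which by the triangle inequality is $\leqslant 1$ and is super-exponentially small in $|T|$ once $|t|\leqslant|T|/2$; for $|t|>|T|/2$ the factor $g(t)\ll(1+|t|)^{-A}\asymp(1+|T|)^{-A}$ dominates the polynomial contribution of the $\Gamma$-ratio. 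Splitting the $t$-integral at $|t|=|T|/2$ and estimating the two pieces (absolute convergence of the outer piece being secured by the decay of $g$), one obtains $\mathring{g}(\tau,s)\ll(1+|\tau|)^{-A}$, up to an absolute loss in the exponent which is harmless since $A$ may be taken as large as desired; should the poles of the $\Gamma$-ratio intervene, one first deforms the $t$-contour as in the previous paragraph, which does not affect the estimate.

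The main obstacle is the bookkeeping in the decay estimate: one must pin down the range of $t$ where the Stirling exponential degenerates to $\mathrm{O}(1)$, verify that there the decay of $g$ outweighs the polynomial growth of the $\Gamma$-ratio, and check absolute convergence of the remaining integral uniformly in $s$ and $\sigma$. The continuation step is comparatively soft, requiring only that the poles of $\Gamma(s-\tau+it)$ are simple and finite in number over bounded $(\tau,s)$-sets, that the displaced integral still converges because $g$ decays on the whole strip, and that the continuation agrees with~\eqref{G-function} on the domain $\Re(s)>\Re(\tau)>0$ of absolute convergence. Both points are routine once organised carefully and present no conceptual difficulty.
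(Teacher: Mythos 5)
Your argument matches the paper's in spirit: the paper's one-line proof (deferring to Motohashi~\cite[Lemma~4.1]{Motohashi1997-2}) proves entirety by a downward shift of the $t$-contour in~\eqref{Mellin-transform} to carry the integral past the poles of $\Gamma(s-\tau+it)$, and obtains the decay from an upward shift combined with Stirling, which is exactly the Stirling analysis of the ratio $\Gamma(\tau)\Gamma(s-\tau+it)/\Gamma(s+it)$ that you carry out directly by splitting at $|t|=|T|/2$. Your bookkeeping is correct: the exponential weight $\exp\bigl(-\tfrac{\pi}{2}(|T|+|\Im(s)-T+t|-|\Im(s)+t|)\bigr)$ is super-exponentially small for $|t|\leqslant|T|/2$, and on the complementary range the rapid decay $g(t)\ll(1+|t|)^{-A}\asymp(1+|T|)^{-A}$ crushes the residual polynomial factors, so the two proofs are essentially the same.
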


\begin{proof}
Shifting the contour $\Im(t) = 0$ in~\eqref{Mellin-transform} downwards, we confirm the first assertion.~The second claim follows from an upward shift. See~\cite[Lemma 4.1]{Motohashi1997-2} for a similar statement.
\end{proof}

By definition,
\begin{equation*}
\mathcal{OD}_{2}^{\dagger} = \sum_{n, m = 1}^{\infty} \frac{\sigma_{s_{3}-s_{4}}(n, \overline{\psi}) 
\sigma_{s_{1}-s_{2}}(n+m, \psi)}{n^{s_{2}+s_{3}} (n+m)^{s_{1}-s_{2}}} G \left(\frac{m}{n}, s_{2} \right).
\end{equation*}
Lemma~\ref{Ramanujan} yields
\begin{equation*}
\mathcal{OD}_{2}^{\dagger} = \frac{L(1+s_{1}-s_{2}, \overline{\psi})}{\tau(\overline{\psi})} 
\sum_{\ell = 1}^{\infty} \ell^{s_{2}-s_{1}-1} \ \sideset{}{^{\ast}} \sum_{h \tpmod{\ell q}} \overline{\psi}(h) 
\sum_{n, m = 1}^{\infty} e_{\ell q}(h(n+m)) \frac{\sigma_{s_{3}-s_{4}}(n, \overline{\psi})}{n^{s_{2}+s_{3}}} G \left(\frac{m}{n}, s_{2} \right).
\end{equation*}
Given $\alpha > 1$, let
\begin{equation*}
\mathcal{R}_{4, \alpha} \coloneqq \{(s_{1}, s_{2}, s_{3}, s_{4}) \in \mathcal{R}_{4}^{+}: 
\Re(s_{1}) > \Re(s_{2})+1, \, \Re(s_{2}) > \alpha, \, \Re(s_{3}) > 1, \, \Re(s_{4}) > 1 \}.
\end{equation*}
In this domain, there holds
\begin{multline}\label{integral}
\mathcal{OD}_{2}^{\dagger} = \frac{L(1+s_{1}-s_{2}, \overline{\psi})}{\tau(\overline{\psi})} 
\sum_{\ell = 1}^{\infty} \ell^{s_{2}-s_{1}-1} \ \sideset{}{^{\ast}} \sum_{h \tpmod{\ell q}} 
\overline{\psi}(h)\\ \times \int_{(\alpha)} D_{\overline{\psi}} \left(s_{2}+s_{3}-\tau, s_{3}-s_{4}; \frac{h}{\ell q} \right) 
\zeta^{(\frac{h}{\ell q})}(\tau) \mathring{g}(\tau, s_{2}) \frac{d\tau}{2\pi i}.
\end{multline}
In anticipation of shifting the contour to the right, we introduce the new domain
\begin{equation*}
\mathcal{E}_{4, \beta} \coloneqq \{(s_{1}, s_{2}, s_{3}, s_{4}) \in \mathcal{R}_{4}^{+}: 
\Re(s_{1}+s_{3}) < \beta, \ \Re(s_{1}+s_{4}) < \beta, \ \Re(s_{1}+s_{2}+s_{3}+s_{4}) > 3\beta \},
\end{equation*}
where $\beta$ is sufficiently large. For the time being, we restrict our attention to $(s_{1}, s_{2}, s_{3}, s_{4}) \in \mathcal{R}_{4, \alpha} \cap \mathcal{E}_{4, \beta}$, which is not empty whenever $\beta > 1+\alpha$. Then the contour $(\alpha)$ in~\eqref{integral} is shifted to $(\beta)$ and the residue thereof is
\begin{multline*}
\frac{\mathring{g}(s_{2}+s_{4}-1, s_{2})}{q} L(1+s_{1}-s_{2}, \overline{\psi}) L(1+s_{3}-s_{4}, \psi) 
\sum_{(\ell, q) = 1} \ell^{-2-s_{1}+s_{2}-s_{3}+s_{4}} \sum_{n = 1}^{\infty} r_{\ell q}(n) n^{1-s_{2}-s_{4}}\\
 = \frac{\mathring{g}(s_{2}+s_{4}-1, s_{2})}{q} \frac{L(1+s_{1}-s_{2}, \overline{\psi}) 
L(1+s_{3}-s_{4}, \psi)}{\zeta^{q}(2+s_{1}-s_{2}+s_{3}-s_{4})}
\sum_{n = 1}^{\infty} \frac{\sigma_{1+s_{1}-s_{2}+s_{3}-s_{4}}(n) r_{q}(n)}{n^{s_{1}+s_{3}}},
\end{multline*}
where we used the coprimality condition between $\ell$ and $q$ as well as the standard Ramanujan expansion. The sum over $n$ evaluates to
\begin{multline*}
\sum_{n = 1}^{\infty} \frac{\sigma_{1+s_{1}-s_{2}+s_{3}-s_{4}}(n) r_{q}(n)}{n^{s_{1}+s_{3}}}
 = \zeta(s_{1}+s_{3}) \zeta(s_{2}+s_{4}-1) \sum_{c \mid q} \mu(c) c^{2-s_{1}-s_{2}-s_{3}-s_{4}}\\
\times \sum_{d \mid \frac{q}{c}} \mu \left(\frac{q}{cd} \right) d^{1-s_{1}-s_{3}} \sigma_{1+s_{1}-s_{2}+s_{3}-s_{4}}(d).
\end{multline*}
This gives rise to the polar term\footnote{It appears that Motohashi~\cite[Equation (1.5)]{Motohashi1997-3} is erroneous.}
\begin{equation}\label{P}
\mathcal{P} \coloneqq A_{q}(s_{1}, s_{2}, s_{3}, s_{4}) \mathring{g}(s_{2}+s_{4}-1, s_{2}) 
\frac{\zeta(s_{1}+s_{3}) \zeta(s_{2}+s_{4}-1) L(1+s_{1}-s_{2}, \overline{\psi}) 
L(1+s_{3}-s_{4}, \psi)}{\zeta^{q}(2+s_{1}-s_{2}+s_{3}-s_{4})},
\end{equation}
where $A_{q}$ is a multiplicative function in $q$ defined by
\begin{equation*}
A_{q}(s_{1}, s_{2}, s_{3}, s_{4}) \coloneqq \frac{1}{q} \sum_{c \mid q} \mu(c) c^{2-s_{1}-s_{2}-s_{3}-s_{4}} 
\sum_{d \mid \frac{q}{c}} \mu \left(\frac{q}{cd} \right) d^{1-s_{1}-s_{3}} \sigma_{1+s_{1}-s_{2}+s_{3}-s_{4}}(d).
\end{equation*}

\begin{remark}
It follows from multiplicativity that
\begin{equation*}
A_{q} \left(\frac{1}{2}, \frac{1}{2}, \frac{1}{2}, \frac{1}{2} \right) = \prod_{p \mid q} \left(1-\frac{1}{p} \right).
\end{equation*}
\end{remark}

\subsection{Applying Vorono\u{\i} Summation}
By Theorem~\ref{functional-equation-1}, $\mathcal{OD}_{2}^{\dagger}$ is equal to $\mathcal{P}$ plus
\footnotesize
\begin{align*}
& \frac{\psi(-1)}{\tau(\overline{\psi}) q} L(1+s_{1}-s_{2}, \overline{\psi}) 
\sum_{\ell = 1}^{\infty} \ell^{s_{2}-s_{1}-2} \ \sideset{}{^{\ast}} 
\sum_{h \tpmod{\ell q}} \int_{(\beta)} \mathring{g}(\tau, s_{2}) \zeta^{(\frac{h}{\ell q})}(\tau) 
\left(\frac{2\pi}{\ell q} \right)^{2s_{2}+s_{3}+s_{4}-2\tau-2} \Gamma(1+\tau-s_{2}-s_{3})\\
& \quad \times \Gamma(1+\tau-s_{2}-s_{4}) \left[\left\{e \left(\frac{s_{3}-s_{4}}{4} \right)+\psi(-1)e \left(-\frac{s_{3}-s_{4}}{4} \right) \right\} 
D_{\overline{\psi}} \left(1+\tau-s_{2}-s_{4}, s_{3}-s_{4}; \frac{\overline{h}}{\ell q} \right) \right.\\
& \quad - \left.\left\{e \left(\frac{2s_{2}+s_{3}+s_{4}-2\tau}{4} \right)+\psi(-1)e \left(-\frac{2s_{2}+s_{3}+s_{4}-2\tau}{4} \right) \right\} 
D_{\overline{\psi}} \left(1+\tau-s_{2}-s_{4}, s_{3}-s_{4}; -\frac{\overline{h}}{\ell q} \right) \right] \frac{d\tau}{2\pi i}.
\end{align*}
\normalsize
Executing the sum over $h$, we arrive at the following expression.
\begin{proposition}\label{proposition}
The off-diagonal contribution $\mathcal{OD}_{2}^{\dagger}$ extends meromorphically to $\mathcal{E}_{4, \beta}$, and there we have that
\begin{equation*}
\mathcal{OD}_{2}^{\dagger} = \mathcal{P}+\sum_{\pm} \mathcal{J}_{\pm},
\end{equation*}
where $\mathcal{P}$ is defined by~\eqref{P}, and
\begin{equation*}
\mathcal{J}_{\pm} \coloneqq \frac{\tau(\psi)}{q} \left(\frac{q}{2\pi} \right)^{1+s_{1}-s_{2}} L(1+s_{1}-s_{2}, \overline{\psi}) 
\sum_{m, n = 1}^{\infty} \frac{\sigma_{s_{3}-s_{4}}(n, \overline{\psi}) \mathcal{O}_{q}(m, \pm n; \Psi_{\texttt{s}}^{\pm})}
{m^{\frac{s_{1}+s_{2}+s_{3}+s_{4}-1}{2}} n^{\frac{1+s_{1}-s_{2}+s_{3}-s_{4}}{2}}}
\end{equation*}
with $\mathcal{O}_{q}(m, n; h)$ defined by~\eqref{O}, and
\begin{multline}\label{+}
\Psi_{\texttt{s}}^{+}(x) \coloneqq \left\{e \left(\frac{s_{3}-s_{4}}{4} \right)+\psi(-1)e \left(-\frac{s_{3}-s_{4}}{4} \right) \right\}\\
\times \int_{(\beta)} \left(\frac{x}{2} \right)^{s_{1}+s_{2}+s_{3}+s_{4}-1-2\tau} \Gamma(1+\tau-s_{2}-s_{3}) 
\Gamma(1+\tau-s_{2}-s_{4}) \mathring{g}(\tau, s_{2}) \frac{d\tau}{2\pi i},
\end{multline}
\begin{multline}\label{-}
\Psi_{\texttt{s}}^{-}(x) \coloneqq -\int_{(\beta)} \left\{e \left(\frac{2s_{2}+s_{3}+s_{4}-2\tau}{4} \right)
 + \psi(-1)e \left(-\frac{2s_{2}+s_{3}+s_{4}-2\tau}{4} \right) \right\}\\
\times \left(\frac{x}{2} \right)^{s_{1}+s_{2}+s_{3}+s_{4}-1-2\tau} \Gamma(1+\tau-s_{2}-s_{3}) 
\Gamma(1+\tau-s_{2}-s_{4}) \mathring{g}(\tau, s_{2}) \frac{d\tau}{2\pi i}.
\end{multline}
\end{proposition}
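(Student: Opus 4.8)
The plan is to start from the display immediately preceding the statement, in which Theorem~\ref{functional-equation-1} has been used to replace $D_{\overline{\psi}}(s_{2}+s_{3}-\tau,s_{3}-s_{4};h/\ell q)$ by its functional equation and the contour $(\alpha)$ in~\eqref{integral} has been moved to $(\beta)$, crossing the simple pole at $\tau=s_{2}+s_{4}-1$ whose residue assembles into $\mathcal{P}$ of~\eqref{P}. It then remains to reshape the surviving ``main'' piece, which carries the two transforms $D_{\overline{\psi}}(1+\tau-s_{2}-s_{4},s_{3}-s_{4};\pm\overline{h}/\ell q)$ together with the companion factor $\zeta^{(h/\ell q)}(\tau)$. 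I would proceed in six steps: (i) open $\zeta^{(h/\ell q)}(\tau)$ and the two Estermann series as Dirichlet series; (ii) execute the inner sum over $h$, which collapses to a Kloosterman sum; (iii) re-index $c=\ell q$ and rename the two remaining summation variables $m,n$; (iv) recognise the leftover $\tau$-integral, once the exponential prefactors have been separated, as $\Psi_{\texttt{s}}^{\pm}$ of~\eqref{+}--\eqref{-} and the sum over $c$ and over residues as $\mathcal{O}_{q}(m,\pm n;\Psi_{\texttt{s}}^{\pm})$ of~\eqref{O}; (v) consolidate the archimedean and conductor factors using the Gau{\ss}-sum identity $\tau(\psi)\tau(\overline{\psi})=\psi(-1)q$; and (vi) check absolute convergence throughout, which legitimises all the interchanges.

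For (i)--(ii), on the line $\Re(\tau)=\beta$ with $\beta$ large both series $\zeta^{(h/\ell q)}(\tau)=\sum_{m\geqslant1}e_{\ell q}(hm)m^{-\tau}$ and $D_{\overline{\psi}}(1+\tau-s_{2}-s_{4},s_{3}-s_{4};\pm\overline{h}/\ell q)=\sum_{n\geqslant1}\sigma_{s_{3}-s_{4}}(n,\overline{\psi})\,e_{\ell q}(\pm n\overline{h})\,n^{-(1+\tau-s_{2}-s_{4})}$ converge absolutely, while the sum over $h$ is finite, so one may interchange freely and invoke $\sums_{h\tpmod{\ell q}}e_{\ell q}(hm\pm n\overline{h})=S(m,\pm n;\ell q)$; the character $\overline{\psi}(h)$ from the Ramanujan expansion of Lemma~\ref{Ramanujan} has already been folded into the $\psi(-1)$ displayed before the statement via $\overline{\psi}(h)\overline{\psi}(-\overline{h})=\overline{\psi}(-1)=\psi(-1)$. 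For (iii)--(v), putting $c=\ell q$ turns $\sum_{\ell\geqslant1}$ into $\sum_{q\mid c}$, and substituting $x=4\pi\sqrt{mn}/c$ in $(x/2)^{s_{1}+s_{2}+s_{3}+s_{4}-1-2\tau}$ produces powers of $m$ and $n$ that combine with $m^{-\tau}$ and $n^{-(1+\tau-s_{2}-s_{4})}$ to give exactly $m^{-(s_{1}+s_{2}+s_{3}+s_{4}-1)/2}$ and $n^{-(1+s_{1}-s_{2}+s_{3}-s_{4})/2}$, while the powers of $c$ reduce to $c^{-1}$ (the $\tau$-dependent part being already present in $x^{s_{1}+s_{2}+s_{3}+s_{4}-1-2\tau}$) and the remaining factors $\ell^{s_{2}-s_{1}-2}$, $(2\pi/\ell q)^{2s_{2}+s_{3}+s_{4}-2\tau-2}$, $\psi(-1)/(\tau(\overline{\psi})q)$ collapse, after $\psi(-1)/\tau(\overline{\psi})=\tau(\psi)/q$, to $(q/2\pi)^{1+s_{1}-s_{2}}$ and $\tau(\psi)/q$. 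The $+\overline{h}$ term, carrying the $\tau$-free factor $e((s_{3}-s_{4})/4)+\psi(-1)e(-(s_{3}-s_{4})/4)$, gives $\mathcal{J}_{+}$ with $\Psi_{\texttt{s}}^{+}$ as in~\eqref{+}; the $-\overline{h}$ term, carrying the $\tau$-dependent factor $-\{e((2s_{2}+s_{3}+s_{4}-2\tau)/4)+\psi(-1)e(-(2s_{2}+s_{3}+s_{4}-2\tau)/4)\}$, gives $\mathcal{J}_{-}$ with $\Psi_{\texttt{s}}^{-}$ as in~\eqref{-}.

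Step~(vi) is the crux. Working in $\mathcal{R}_{4,\alpha}\cap\mathcal{E}_{4,\beta}$ and keeping the $\tau$-integral outermost, I would verify that the sum over $c$ (with $q\mid c$), $m$, and $n$ converges absolutely on $\Re(\tau)=\beta$: the bound $|S(m,\pm n;c)|\leqslant c$ reduces the $c$-sum to $\sum_{q\mid c}c^{2\beta-\Re(s_{1}+s_{2}+s_{3}+s_{4})+1}$, convergent since $\Re(s_{1}+s_{2}+s_{3}+s_{4})>3\beta$; the $m$-sum is $\sum_{m}m^{-\beta}$; and the $n$-sum is dominated by $\sum_{n}\sigma_{\Re(s_{3}-s_{4})}(n)\,n^{-(1+\beta-\Re(s_{2}+s_{4}))}$, which converges because $\beta>\Re(s_{2}+s_{3})$ and $\beta>\Re(s_{2}+s_{4})$, both consequences of $\Re(s_{1}+s_{3})<\beta$, $\Re(s_{1}+s_{4})<\beta$ and $\Re(s_{1})>\Re(s_{2})+1$. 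Since the $\tau$-integrand also carries the rapidly decaying $\mathring{g}(\tau,s_{2})$ of Lemma~\ref{entire} and the product $\Gamma(1+\tau-s_{2}-s_{3})\Gamma(1+\tau-s_{2}-s_{4})$ (exponentially small in $|\Im(\tau)|$ by Stirling), the whole iterated integral and sum are absolutely convergent, so Fubini lets one carry the $\tau$-integral back inside, producing $\Psi_{\texttt{s}}^{\pm}$ and the stated form of $\mathcal{J}_{\pm}$. The same estimates, now using $\Psi_{\texttt{s}}^{\pm}(x)\ll x^{\Re(s_{1}+s_{2}+s_{3}+s_{4})-1-2\beta}$ as $x\to0^{+}$ (the exponent genuinely exceeding $\beta-1$), show $\mathcal{J}_{\pm}$ is absolutely convergent on $\mathcal{E}_{4,\beta}$, so that the identity $\mathcal{OD}_{2}^{\dagger}=\mathcal{P}+\sum_{\pm}\mathcal{J}_{\pm}$ proved in the subregion gives the asserted continuation of $\mathcal{OD}_{2}^{\dagger}$ to $\mathcal{E}_{4,\beta}$.

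The main obstacle is exactly this convergence bookkeeping. The crudest pointwise bound on $\Psi_{\texttt{s}}^{\pm}$ is too weak; one must retain the full exponent $\Re(s_{1}+s_{2}+s_{3}+s_{4})-1-2\beta$, which is made large by the inequality $\Re(s_{1}+s_{2}+s_{3}+s_{4})>3\beta$ built into $\mathcal{E}_{4,\beta}$, and the domain $\mathcal{E}_{4,\beta}$ is calibrated precisely so that this, with $\Re(s_{1}+s_{3})<\beta$ and $\Re(s_{1}+s_{4})<\beta$, makes every sum and integral in the rearrangement absolutely convergent. The algebra in steps~(iii)--(v) is routine but has to be tracked with care to land exactly the normalisations in~\eqref{+}--\eqref{-}.
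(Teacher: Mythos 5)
Your proposal is correct and follows essentially the same route as the paper. The paper's proof is compressed into a single remark — ``Performing the sum over $h$, we eventually arrive at the following proposition'' — after the display obtained from Theorem~\ref{functional-equation-1}, and your six steps fill in precisely those omitted manipulations: expanding $\zeta^{(h/\ell q)}$ and the two Estermann series as Dirichlet series, executing the $h$-sum to produce $S(m,\pm n;\ell q)$ (after the twist $\overline{\psi}(h)$ has been absorbed into the $\psi(-1)$ factor via $\overline{\psi}(h)\overline{\psi}(-\overline{h})=\psi(-1)$), reindexing $c=\ell q$ so that the $c$-powers conspire with $(x/2)^{s_1+s_2+s_3+s_4-1-2\tau}$ to reproduce $\mathcal{O}_q(m,\pm n;\Psi^\pm_\texttt{s})$, and consolidating the archimedean and $q$-normalisations using $\psi(-1)/\tau(\overline{\psi})=\tau(\psi)/q$ to land $\frac{\tau(\psi)}{q}(q/2\pi)^{1+s_1-s_2}$. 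The convergence bookkeeping you carry out in $\mathcal{R}_{4,\alpha}\cap\mathcal{E}_{4,\beta}$, and the subsequent extension of the identity to $\mathcal{E}_{4,\beta}$ using the decay of $\Psi^\pm_\texttt{s}$ at $0^+$, matches what the definition of $\mathcal{E}_{4,\beta}$ in~\eqref{mathcal-E} was engineered to guarantee.
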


Proposition~\ref{proposition} reduces the problem to an analysis of sums of Kloosterman sums of moduli divisible by $q$. This is where the spectral theory of automorphic forms comes into play.

\subsection{Applying the Kuznetsov Formula}
We are in a position to apply Theorem~\ref{Kuznetsov-formula}~to~the sums of Kloosterman sums via a sophisticated analysis of integrals involving Bessel functions. Following Motohashi~\cite{Motohashi1997-2} verbatim, we first deal with the contribution of the plus sign. To reduce the integral transform $\mathscr{L}^{+}$ in light of the definition~\eqref{+}, we consider
\begin{equation}\label{double-integral}
\int_{0}^{\infty} J_{2it}(x) \int_{(\beta)} \Gamma(1+\tau-s_{2}-s_{3}) \Gamma(1+\tau-s_{2}-s_{4}) 
\left(\frac{x}{2} \right)^{s_{1}+s_{2}+s_{3}+s_{4}-2-2\tau} \mathring{g}(\tau, s_{2}) d\tau dx.
\end{equation}
In an appropriate domain as in~\cite[Equation (4.4.8)]{Motohashi1997-2}, this equals
\begin{equation}\label{four-gamma}
\int_{(\beta)} \frac{\Gamma(\frac{s_{1}+s_{2}+s_{3}+s_{4}-1}{2}-\tau+it)}
{\Gamma(\frac{3-s_{1}-s_{2}-s_{3}-s_{4}}{2}+\tau+it)} 
\Gamma(1+\tau-s_{2}-s_{3}) \Gamma(1+\tau-s_{2}-s_{4}) \mathring{g}(\tau, s_{2}) d\tau,
\end{equation}
because
\begin{equation*}
\int_{0}^{\infty} J_{\nu}(x) \left(\frac{x}{2} \right)^{-\mu} dx
 = \Gamma \left(\frac{1+\nu-\mu}{2} \right) \Gamma \left(\frac{1+\nu+\mu}{2} \right)^{-1}, \qquad \frac{1}{2} < \Re(\mu) < 1+\Re(\nu).
\end{equation*}
Since the integral~\eqref{four-gamma} is holomorphic in that domain, the analytic continuation implies that the double integral~\eqref{double-integral} is equal to~\eqref{four-gamma} throughout $\mathcal{E}_{4, \beta}$. Note that
\begin{equation}\label{Gamma}
\frac{\Gamma(a-\tau+it)}{\Gamma(1-a+\tau+it)}-\frac{\Gamma(a-\tau-it)}{\Gamma(1-a+\tau-it)}
 = \frac{2}{\pi i} \sinh(\pi t) \cos(\pi(a-\tau)) \Gamma(a-\tau+it) \Gamma(a-\tau-it),
\end{equation}
which follows from $\Gamma(s) \Gamma(1-s) = \frac{\pi}{\sin(\pi s)}$. Substituting $a = \frac{s_{1}+s_{2}+s_{3}+s_{4}-1}{2}$, we have after some rearrangements that
\begin{align*}
\mathscr{L}^{+} \Psi_{\texttt{s}}^{+}(t) &= \left\{e \left(\frac{s_{3}-s_{4}}{4} \right)+\psi(-1)e \left(-\frac{s_{3}-s_{4}}{4} \right) \right\} 
\int_{(\beta)} \sin \left(\frac{\pi(s_{1}+s_{2}+s_{3}+s_{4}-2\tau)}{2} \right)\\ 
& \quad \times \Gamma \left(\frac{s_{1}+s_{2}+s_{3}+s_{4}-1}{2}+it-\tau \right) 
\Gamma \left(\frac{s_{1}+s_{2}+s_{3}+s_{4}-1}{2}-it-\tau \right)\\ 
& \quad \times \Gamma(1+\tau-s_{2}-s_{3}) \Gamma(1+\tau-s_{2}-s_{4}) \mathring{g}(\tau, s_{2}) \frac{d\tau}{2\pi i}.
\end{align*}
Similarly, the holomorphic contribution boils down to
\begin{multline*}
\mathscr{L}^{\text{hol}} \Psi_{\texttt{s}}^{+}(k) =i^{k} \pi \left\{e \left(\frac{s_{3}-s_{4}}{4} \right)
 + \psi(-1)e \left(-\frac{s_{3}-s_{4}}{4} \right) \right\}\\
\times \int_{(\beta)} \frac{\Gamma(\frac{k-2+s_{1}+s_{2}+s_{3}+s_{4}}{2}-\tau)}
{\Gamma(\frac{k+2-s_{1}-s_{2}-s_{3}-s_{4}}{2}+\tau)} \Gamma(1+\tau-s_{2}-s_{3}) 
\Gamma(1+\tau-s_{2}-s_{4}) \mathring{g}(\tau, s_{2}) \frac{d\tau}{2\pi i}.
\end{multline*}

We now focus on the case of the minus sign for which Lemma~\ref{entire} plays an important r\^{o}le. By definition,
\begin{equation*}
\mathscr{L}^{-} \Psi_{\texttt{s}}^{-}(t) = 4 \cosh(\pi t) \int_{0}^{\infty} \Psi_{\texttt{s}}^{-}(x) K_{2it}(x) \frac{dx}{x}.
\end{equation*}
Inserting~\eqref{-}, we obtain an absolutely convergent integral, because
\begin{equation*}
K_{2it}(x) \ll 
	\begin{cases}
	|\log x| & \text{as $x \to 0$},\\
	\exp(-x) & \text{as $x \to \infty$}.
	\end{cases}
\end{equation*}
By Heaviside's integral formula
\begin{equation*}
\int_{0}^{\infty} K_{2v}(x) \left(\frac{x}{2} \right)^{2s-1} dx = \frac{\Gamma(s+v) \Gamma(s-v)}{2}, \qquad \Re(s) > |\Re(v)|,
\end{equation*}
there holds
\begin{align*}
\mathscr{L}^{-} \Psi_{\texttt{s}}^{-}(t) &= -\cosh(\pi t) \int_{(\beta)} 
\left\{e \left(\frac{2s_{2}+s_{3}+s_{4}-2\tau}{4} \right)+\psi(-1)e \left(-\frac{2s_{2}+s_{3}+s_{4}-2\tau}{4} \right) \right\}\\
& \quad \times \Gamma \left(\frac{s_{1}+s_{2}+s_{3}+s_{4}-1}{2}+it-\tau \right) 
\Gamma \left(\frac{s_{1}+s_{2}+s_{3}+s_{4}-1}{2}-it-\tau \right)\\
& \quad \times \Gamma(1+\tau-s_{2}-s_{3}) \Gamma(1+\tau-s_{2}-s_{4}) \mathring{g}(\tau, s_{2}) \frac{d\tau}{2\pi i}.
\end{align*}
We now define
\begin{align*}
\Phi_{\texttt{s}}^{+}(\xi) &\coloneqq -i(2\pi)^{s_{1}-s_{2}-2} \left\{e \left(\frac{s_{3}-s_{4}}{4} \right)+\psi(-1)e \left(-\frac{s_{3}-s_{4}}{4} \right) \right\}\\
& \quad \times \int_{-i\infty}^{i\infty} \sin \left(\frac{\pi(s_{1}+s_{2}+s_{3}+s_{4}-2\tau)}{2} \right)
\Gamma \left(\frac{s_{1}+s_{2}+s_{3}+s_{4}-1}{2}+\xi-\tau \right)\\
& \quad \times \Gamma \left(\frac{s_{1}+s_{2}+s_{3}+s_{4}-1}{2}-\xi-\tau \right) \Gamma(1+\tau-s_{2}-s_{3}) 
\Gamma(1+\tau-s_{2}-s_{4}) \mathring{g}(\tau, s_{2}) d\tau,\\
\Phi_{\texttt{s}}^{-}(\xi) &\coloneqq i(2\pi)^{s_{1}-s_{2}-2} \cos(\pi \xi) 
\int_{-i\infty}^{i\infty} \left\{e \left(\frac{2s_{2}+s_{3}+s_{4}-2\tau}{4} \right)+\psi(-1)e \left(-\frac{2s_{2}+s_{3}+s_{4}-2\tau}{4} \right) \right\}\\
& \quad \times \Gamma \left(\frac{s_{1}+s_{2}+s_{3}+s_{4}-1}{2}+\xi-\tau \right) 
\Gamma \left(\frac{s_{1}+s_{2}+s_{3}+s_{4}-1}{2}-\xi-\tau \right)\\
& \quad \times \Gamma(1+\tau-s_{2}-s_{3}) \Gamma(1+\tau-s_{2}-s_{4}) \mathring{g}(\tau, s_{2}) d\tau,\\
\Xi_{\texttt{s}}(\xi) &\coloneqq \int_{-i\infty}^{i\infty} \frac{\Gamma(\xi+\frac{s_{1}+s_{2}+s_{3}+s_{4}-1}{2}-\tau)}
{\Gamma(\xi+\frac{3-s_{1}-s_{2}-s_{3}-s_{4}}{2}+\tau)} 
\Gamma(1+\tau-s_{2}-s_{3}) \Gamma(1+\tau-s_{2}-s_{4}) \mathring{g}(\tau, s_{2}) \frac{d\tau}{2\pi i}.
\end{align*}
Here the contour in $\Phi_{\texttt{s}}^{+}(\xi)$ is curved to ensure that the poles of the first two gamma factors in the integrand lie to the right of the path and those of other factors lie to the left of the path; the variables $\xi, s_{1}, s_{2}, s_{3}, s_{4}$ are chosen such that the path can be drawn. The contour in $\Phi_{\texttt{s}}^{-}(\xi)$ is chosen in the same way. On the other hand, the contour in $\Xi$ separates the poles of $\Gamma(\xi+\frac{s_{1}+s_{2}+s_{3}+s_{4}-1}{2}-\tau)$ and those of $\Gamma(1+\tau-s_{2}-s_{3}) \Gamma(1+\tau-s_{2}-s_{4}) \mathring{g}(\tau, s_{2})$ to the left and the right of the path, respectively.
\begin{proposition}\label{three-functions}
Keep the notation as above. Then we have that
\begin{equation}\label{Phi+}
\Phi_{\texttt{s}}^{+}(\xi) = -\frac{(2\pi)^{s_{1}-s_{2}}}{4 \sin(\pi \xi)} 
\left\{e \left(\frac{s_{3}-s_{4}}{4} \right)+\psi(-1)e \left(-\frac{s_{3}-s_{4}}{4} \right) \right\} 
\left[\Xi_{\texttt{s}}(\xi)-\Xi_{\texttt{s}}(-\xi) \right],
\end{equation}
and
\begin{multline}\label{Phi-}
\Phi_{\texttt{s}}^{-}(\xi) = \frac{(2\pi)^{s_{1}-s_{2}}}{4i \sin(\pi \xi)}
\left[\left\{\psi(-1)e \left(\frac{s_{1}-s_{2}+2\xi}{4} \right)
 - e \left(-\frac{s_{1}-s_{2}+2\xi}{4} \right) \right\} \Xi_{\texttt{s}}(\xi) \right.\\
\left. - \left\{\psi(-1)e \left(\frac{s_{1}-s_{2}-2\xi}{4} \right)
 - e \left(-\frac{s_{1}-s_{2}-2\xi}{4} \right) \right\} \Xi_{\texttt{s}}(-\xi) \right],
\end{multline}
provided that the left-hand sides are well-defined. For $t \in \mathbb{R}$ and $\texttt{s} \in \mathcal{E}_{4, \beta}$, we have that
\begin{align*}
\mathscr{L}^{+} \Psi_{\texttt{s}}^{+}(t) = (2\pi)^{1-s_{1}+s_{2}} \Phi_{\texttt{s}}^{+}(it),\\
\mathscr{L}^{-} \Psi_{\texttt{s}}^{-}(t) = (2\pi)^{1-s_{1}+s_{2}} \Phi_{\texttt{s}}^{-}(it).
\end{align*}
For $k \in 2\mathbb{N}$ and $\texttt{s} \in \mathcal{E}_{4, \beta}$, we have that
\begin{equation*}
\mathscr{L}^{\mathrm{hol}} \Psi_{\texttt{s}}^{+}(k)
 = i^{k} \pi \left\{e \left(\frac{s_{3}-s_{4}}{4} \right)+\psi(-1)e \left(-\frac{s_{3}-s_{4}}{4} \right) \right\} 
\Xi_{\texttt{s}} \left(\frac{k-1}{2} \right).
\end{equation*}
\end{proposition}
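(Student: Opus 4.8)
The plan is to read the three transform identities directly off the formulas for $\mathscr{L}^{\pm}\Psi_{\texttt{s}}^{\pm}(t)$ and $\mathscr{L}^{\mathrm{hol}}\Psi_{\texttt{s}}^{+}(k)$ already obtained above, and to prove the two closed forms~\eqref{Phi+} and~\eqref{Phi-} by a single mechanism. Write $A\coloneqq\tfrac{s_{1}+s_{2}+s_{3}+s_{4}-1}{2}$ and $R(\tau)\coloneqq\Gamma(1+\tau-s_{2}-s_{3})\Gamma(1+\tau-s_{2}-s_{4})\mathring{g}(\tau,s_{2})$, so that, up to the explicit constants shown in their definitions, both $\Phi_{\texttt{s}}^{\pm}(\xi)$ have the shape $c^{\pm}\int W^{\pm}(\tau)\,\Gamma(A+\xi-\tau)\,\Gamma(A-\xi-\tau)\,R(\tau)\,d\tau$ with $W^{+}(\tau)=\cos(\pi(A-\tau))$ and $W^{-}(\tau)=\cos(\pi\xi)\{e(\tfrac{2s_{2}+s_{3}+s_{4}-2\tau}{4})+\psi(-1)e(-\tfrac{2s_{2}+s_{3}+s_{4}-2\tau}{4})\}$. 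The point is that each $W^{\pm}(\tau)$ is a $\tau$-independent linear combination of $\sin(\pi(A-\xi-\tau))$ and $\sin(\pi(A+\xi-\tau))$, and the reflection formula turns each of these, when multiplied into $\Gamma(A+\xi-\tau)\Gamma(A-\xi-\tau)R(\tau)$, into precisely the integrand of $\Xi_{\texttt{s}}(\pm\xi)$.

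I would dispose of the three transform identities first. Using $\cosh(\pi t)=\cos(\pi it)$, the half-angle relation $\sin(\tfrac{\pi(s_{1}+s_{2}+s_{3}+s_{4}-2\tau)}{2})=\cos(\pi(A-\tau))$, and the bookkeeping $\int(\cdots)\,d\tau=2\pi i\int(\cdots)\,\tfrac{d\tau}{2\pi i}$ together with the prefactor $-i(2\pi)^{s_{1}-s_{2}-2}$ (and, in the minus case, $i(2\pi)^{s_{1}-s_{2}-2}$), the already-computed expressions for $\mathscr{L}^{+}\Psi_{\texttt{s}}^{+}(t)$, $\mathscr{L}^{-}\Psi_{\texttt{s}}^{-}(t)$ and $\mathscr{L}^{\mathrm{hol}}\Psi_{\texttt{s}}^{+}(k)$ agree, integrand by integrand, with $(2\pi)^{1-s_{1}+s_{2}}\Phi_{\texttt{s}}^{+}(it)$, $(2\pi)^{1-s_{1}+s_{2}}\Phi_{\texttt{s}}^{-}(it)$ and $i^{k}\pi\{e(\tfrac{s_{3}-s_{4}}{4})+\psi(-1)e(-\tfrac{s_{3}-s_{4}}{4})\}\Xi_{\texttt{s}}(\tfrac{k-1}{2})$ respectively; for the last identity one checks $\tfrac{k-1}{2}+\tfrac{s_{1}+s_{2}+s_{3}+s_{4}-1}{2}=\tfrac{k-2+s_{1}+s_{2}+s_{3}+s_{4}}{2}$ and $\tfrac{k-1}{2}+\tfrac{3-s_{1}-s_{2}-s_{3}-s_{4}}{2}=\tfrac{k+2-s_{1}-s_{2}-s_{3}-s_{4}}{2}$. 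Throughout one notes that for $\xi=it$ with $t\in\R$, and for $\xi=\tfrac{k-1}{2}$ with $k\in2\N$, the curved path in the definition of $\Phi_{\texttt{s}}^{\pm}$ (resp. of $\Xi_{\texttt{s}}$) may be replaced by the vertical line $(\beta)$ used earlier, since in $\mathcal{E}_{4,\beta}$ the poles of the leading $\Gamma$-factors then lie at real part at least $\Re(A)>\beta$; no residue intervenes, so these equalities follow once the constants are matched.

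For~\eqref{Phi+} and~\eqref{Phi-} I would proceed uniformly. For $\xi\notin\tfrac12\Z$ (the change of basis from $\{e^{\mp i\pi\tau}\}$ to $\{\sin(\pi(A-\xi-\tau)),\sin(\pi(A+\xi-\tau))\}$ having determinant $-\tfrac{i}{2}\sin(2\pi\xi)\ne0$) write $W^{\pm}(\tau)=a^{\pm}\sin(\pi(A-\xi-\tau))+b^{\pm}\sin(\pi(A+\xi-\tau))$ with $a^{\pm},b^{\pm}$ independent of $\tau$. The reflection formula $\Gamma(z)\Gamma(1-z)=\pi/\sin(\pi z)$ (equivalently~\eqref{Gamma}) gives
\begin{align*}
\sin(\pi(A-\xi-\tau))\,\Gamma(A+\xi-\tau)\Gamma(A-\xi-\tau)&=\pi\,\frac{\Gamma(A+\xi-\tau)}{\Gamma(1-A+\xi+\tau)},\\
\sin(\pi(A+\xi-\tau))\,\Gamma(A+\xi-\tau)\Gamma(A-\xi-\tau)&=\pi\,\frac{\Gamma(A-\xi-\tau)}{\Gamma(1-A-\xi+\tau)},
\end{align*}
whence the integrand of $\Phi_{\texttt{s}}^{\pm}(\xi)$ becomes $\pi c^{\pm}[a^{\pm}\tfrac{\Gamma(A+\xi-\tau)}{\Gamma(1-A+\xi+\tau)}+b^{\pm}\tfrac{\Gamma(A-\xi-\tau)}{\Gamma(1-A-\xi+\tau)}]R(\tau)$. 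Each of the two resulting integrals converges absolutely — the four $\Gamma$-factors produce exponential decay $e^{-2\pi|\Im\tau|}$, which beats the at most $e^{\pi|\Im\tau|}$ growth of $W^{\pm}$, while $\mathring{g}(\tau,s_{2})$ decays rapidly in $\Im\tau$ by Lemma~\ref{entire} — and the curved path defining $\Phi_{\texttt{s}}^{\pm}$, which by construction separates the poles of $\Gamma(A\pm\xi-\tau)$ from those of $R$, is admissible for both $\Xi_{\texttt{s}}(\xi)$ and $\Xi_{\texttt{s}}(-\xi)$. Therefore $\Phi_{\texttt{s}}^{\pm}(\xi)=2\pi^{2}i\,c^{\pm}[a^{\pm}\Xi_{\texttt{s}}(\xi)+b^{\pm}\Xi_{\texttt{s}}(-\xi)]$. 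It remains to evaluate $(a^{\pm},b^{\pm})$: in the plus case $\sin(\pi(A-\xi-\tau))-\sin(\pi(A+\xi-\tau))=-2\sin(\pi\xi)\cos(\pi(A-\tau))$ gives $a^{+}=-b^{+}=-\tfrac{1}{2\sin\pi\xi}$, and the arithmetic of $2\pi^{2}i\,c^{+}$ then collapses exactly to~\eqref{Phi+}; in the minus case, matching the coefficients of $e^{-i\pi\tau}$ and $e^{+i\pi\tau}$ in the defining relation for $W^{-}$ yields a $2\times2$ system whose solution — using $A-(s_{2}+\tfrac{s_{3}+s_{4}}{2})=\tfrac{s_{1}-s_{2}-1}{2}$, so that $e^{\pi iA}e^{\mp\pi i(s_{2}+(s_{3}+s_{4})/2)}$ is a unimodular multiple of $e(\pm\tfrac{s_{1}-s_{2}}{4})$, together with the cancellation $\cos(\pi\xi)/\sin(2\pi\xi)=\tfrac{1}{2\sin\pi\xi}$ — produces precisely the coefficients of $\Xi_{\texttt{s}}(\xi)$ and $\Xi_{\texttt{s}}(-\xi)$ in~\eqref{Phi-}. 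Both closed forms then persist for $\xi\in\tfrac12\Z$ by continuity wherever the left-hand sides are defined.

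The only step I expect to be delicate is the contour bookkeeping: after the split one must know that the two Mellin--Barnes integrals converge individually (routine, by Stirling and Lemma~\ref{entire}) and, more to the point, that the curved paths prescribed for $\Phi_{\texttt{s}}^{\pm}$ are \emph{simultaneously} legitimate contours for $\Xi_{\texttt{s}}(\xi)$ and $\Xi_{\texttt{s}}(-\xi)$, so that no residue of $\Gamma(A\pm\xi-\tau)$ is collected in passing from the combined integral to the pair $\Xi_{\texttt{s}}(\pm\xi)$ — which is exactly why those paths were positioned as they were. Everything else is the reflection formula plus careful tracking of the constants $(2\pi)^{s_{1}-s_{2}-2}$, the factors of $i$, and the two elementary trigonometric/exponential identities; this is fiddly in the asymmetric minus case but involves nothing deeper.
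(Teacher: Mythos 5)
Your proposal is correct and takes essentially the same route as the paper: the key tool in both is the $\Gamma$ reflection formula (the paper's~\eqref{Gamma}), and the trigonometric identities $\sin(\pi(A-\xi-\tau))-\sin(\pi(A+\xi-\tau))=-2\sin(\pi\xi)\cos(\pi(A-\tau))$ and $\sin(2\pi\xi)/\sin(\pi\xi)=2\cos(\pi\xi)$ do the same work in both arguments. Your ``change of basis'' packaging simply runs the paper's $\Phi^{-}$ computation in the reverse direction (from $\Phi^{\pm}$ to a linear combination of $\Xi_{\texttt{s}}(\pm\xi)$ rather than from the $\Xi$-combination to $\Phi^{-}$), and you make explicit the matching of transform prefactors that the paper carries out in the discussion preceding the proposition; the constants you track ($c^{\pm}$, $a^{\pm}$, $b^{\pm}$, and the $2\pi^{2}i$ from the $\tfrac{d\tau}{2\pi i}$ normalisation of $\Xi_{\texttt{s}}$) reproduce the stated coefficients $-\tfrac{(2\pi)^{s_{1}-s_{2}}}{4\sin\pi\xi}$ and $\tfrac{(2\pi)^{s_{1}-s_{2}}}{4i\sin\pi\xi}$ correctly.
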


\begin{proof}
We first establish~\eqref{Phi+}. It follows from~\eqref{Gamma} that
\begin{align*}
&\frac{\Gamma(\xi+\frac{s_{1}+s_{2}+s_{3}+s_{4}-1}{2}-\tau)}
{\Gamma(\xi+\frac{3-s_{1}-s_{2}-s_{3}-s_{4}}{2}+\tau)}
 - \frac{\Gamma(-\xi+\frac{s_{1}+s_{2}+s_{3}+s_{4}-1}{2}-\tau)}
{\Gamma(-\xi+\frac{3-s_{1}-s_{2}-s_{3}-s_{4}}{2}+\tau)}\\
& = -\frac{2}{\pi} \sin(\pi \xi) \sin \left(\frac{\pi(s_{1}+s_{2}+s_{3}+s_{4}-2\tau)}{2} \right)\\
& \quad \times \Gamma \left(\frac{s_{1}+s_{2}+s_{3}+s_{4}-1}{2}+\xi-\tau \right) 
\Gamma \left(\frac{s_{1}+s_{2}+s_{3}+s_{4}-1}{2}-\xi-\tau \right).
\end{align*}
In order to confirm~\eqref{Phi-}, we compute
\small
\begin{align*}
&\left\{\psi(-1)e \left(\frac{s_{1}-s_{2}+2\xi}{4} \right)-e \left(-\frac{s_{1}-s_{2}+2\xi}{4} \right) \right\} 
\frac{\Gamma(\xi+\frac{s_{1}+s_{2}+s_{3}+s_{4}-1}{2}-\tau)}
{\Gamma(\xi+\frac{3-s_{1}-s_{2}-s_{3}-s_{4}}{2}+\tau)}\\
& \quad - \left\{\psi(-1)e \left(\frac{s_{1}-s_{2}-2\xi}{4} \right)-e \left(-\frac{s_{1}-s_{2}-2\xi}{4} \right) \right\} 
\frac{\Gamma(-\xi+\frac{s_{1}+s_{2}+s_{3}+s_{4}-1}{2}-\tau)}
{\Gamma(-\xi+\frac{3-s_{1}-s_{2}-s_{3}-s_{4}}{2}+\tau)}\\
& = \frac{1}{\pi} \Gamma \left(\frac{s_{1}+s_{2}+s_{3}+s_{4}-1}{2}+\xi-\tau \right) 
\Gamma \left(\frac{s_{1}+s_{2}+s_{3}+s_{4}-1}{2}-\xi-\tau \right)\\
& \quad \times \left[\left\{\psi(-1)e \left(\frac{s_{1}-s_{2}+2\xi}{4} \right)-e \left(-\frac{s_{1}-s_{2}+2\xi}{4} \right) \right\} 
\sin \left(\pi \left(\frac{s_{1}+s_{2}+s_{3}+s_{4}-1}{2}-\xi-\tau \right) \right) \right.\\
&\left. \quad - \left\{\psi(-1)e \left(\frac{s_{1}-s_{2}-2\xi}{4} \right)-e \left(-\frac{s_{1}-s_{2}-2\xi}{4} \right) \right\} 
\sin \left(\pi \left(\frac{s_{1}+s_{2}+s_{3}+s_{4}-1}{2}+\xi-\tau \right) \right) \right].
\end{align*}
\normalsize
Via an elementary manipulation of trigonometrics, the right-hand side is equal to
\begin{multline*}
\frac{\sin(2\pi \xi)}{\pi i} \left\{e \left(\frac{2s_{2}+s_{3}+s_{4}-2\tau}{4} \right)+\psi(-1)e \left(-\frac{2s_{2}+s_{3}+s_{4}-2\tau}{4} \right) \right\}\\
\times \Gamma \left(\frac{s_{1}+s_{2}+s_{3}+s_{4}-1}{2}+\xi-\tau \right) 
\Gamma \left(\frac{s_{1}+s_{2}+s_{3}+s_{4}-1}{2}-\xi-\tau \right)
\end{multline*}
Proposition~\ref{three-functions} now follows from the relation $\frac{\sin(2\pi \xi)}{\sin(\pi \xi)} = 2 \cos(\pi \xi)$.
\end{proof}

\subsection{Cubic Moment}
The absolute convergence is guaranteed when the double summation over $m$ and $n$ is concerned, because we know the trivial bound for the Hecke eigenvalues and $\texttt{s} \in \mathcal{E}_{4, \beta}$. The problem boils down to bounding $\mathscr{L}^{\pm} \Psi_{\texttt{s}}^{\pm}$, and Proposition~\ref{three-functions} reduces our task to the analysis of $\Xi$. If $t \in \mathbb{R}$ and $k \in 2\mathbb{N}$ tend to infinity, then
\begin{equation*}
\Xi_{\texttt{s}}(it) \ll |t|^{-A}, \qquad \Xi_{\texttt{s}} \left(\frac{k-1}{2} \right) \ll k^{-A}
\end{equation*}
uniformly for bounded $\texttt{s}$. Hence we deduce the meromorphic continuation of $\mathcal{J}_{\pm}$ to $\mathbb{C}^{4}$. This shows that the decomposition~\eqref{Atkinson-dissection} is valid throughout $\mathbb{C}^{4}$.

The contribution of Hecke--Maa{\ss} newforms is calculated as
\begin{multline*}
\sum_{n = 1}^{\infty} \frac{\sigma_{s_{3}-s_{4}}(n, \overline{\psi}) 
\overline{\rho_{f, M, q}(\pm n)}}{n^{\frac{1+s_{1}-s_{2}+s_{3}-s_{4}}{2}}}
 = \frac{\varepsilon_{f}^{\frac{1 \mp 1}{2}}}{L(1, \operatorname{ad} f)^{\frac{1}{2}}(Mq \nu(q))^{\frac{1}{2}}} \prod_{p \mid q_{0}} \left(1-\frac{1}{p^{2}} \right)^{\frac{1}{2}}\\
\times \sum_{d \mid M} \xi_{f}(M, d) d^{\frac{1-s_{1}+s_{2}-s_{3}+s_{4}}{2}} \sum_{n = 1}^{\infty} 
\frac{\sigma_{s_{3}-s_{4}}(n, \overline{\psi}) \lambda_{f}(n)}{n^{\frac{1+s_{1}-s_{2}+s_{3}-s_{4}}{2}}},
\end{multline*}
where $f \in \mathcal{B}^{\ast}(q_{0})$ with $q_{0} M \mid q$. We here used the property $\sigma_{s_{3}-s_{4}}(nd, \overline{\psi}) = \sigma_{s_{3}-s_{4}}(n, \overline{\psi})$ since $d$ is not coprime to $q$. Note that the Hecke relation implies
\begin{equation*}
\sum_{n = 1}^{\infty} \frac{\sigma_{s_{3}-s_{4}}(n, \overline{\psi}) \lambda_{f}(n)}{n^{\frac{1+s_{1}-s_{2}+s_{3}-s_{4}}{2}}}
 = \frac{L(\frac{1+s_{1}-s_{2}+s_{3}-s_{4}}{2}, f) 
L(\frac{1+s_{1}-s_{2}-s_{3}+s_{4}}{2}, f \otimes \overline{\psi})}{L(1+s_{1}-s_{2}, \overline{\psi})}.
\end{equation*}
The denominator now cancels out with $L(1+s_{1}-s_{2}, \overline{\psi})$ appearing in $\mathcal{J}_{\pm}$ in Proposition~\ref{proposition}. Evaluating the sum over $m$ in a similar fashion, we obtain
\begin{multline*}
\sum_{m = 1}^{\infty} \frac{\rho_{f, M, q}(m)}{m^{\frac{s_{1}+s_{2}+s_{3}+s_{4}-1}{2}}}
 = \frac{1}{L(1, \operatorname{ad} f)^{\frac{1}{2}}(Mq \nu(q))^{\frac{1}{2}}} \prod_{p \mid q_{0}} \left(1-\frac{1}{p^{2}} \right)^{\frac{1}{2}}\\
\times \sum_{d \mid M} \xi_{f}(M, d) d^{\frac{3-s_{1}-s_{2}-s_{3}-s_{4}}{2}} L \left(\frac{s_{1}+s_{2}+s_{3}+s_{4}-1}{2}, f \right).
\end{multline*}
The central $L$-values $L(\frac{1}{2}, f)^{2} L(\frac{1}{2}, f \otimes \overline{\psi})$ emerge if we take $\texttt{s} \mapsto (\frac{1}{2}, \frac{1}{2}, \frac{1}{2}, \frac{1}{2})$. A similar process applies to the Eisenstein contribution, namely
\begin{multline*}
\sum_{n = 1}^{\infty} \frac{\sigma_{s_{3}-s_{4}}(n, \overline{\psi}) 
\overline{\rho_{\chi, M, q}(n, t)}}{n^{\frac{1+s_{1}-s_{2}+s_{3}-s_{4}}{2}}}
 = \frac{\overline{C(\chi, M, t)}}{(q \nu(q))^{\frac{1}{2}} \widetilde{\mathfrak{n}}_{q}(M) L^{q}(1-2it, \overline{\chi}^{2})} 
\left(\frac{M_{1}}{M_{2}} \right)^{\frac{1}{2}} \sum_{\delta \mid M_{2}} \delta \mu \left(\frac{M_{2}}{\delta} \right) \chi(\delta)\\
\times \sum_{n = 1}^{\infty} \frac{\sigma_{s_{3}-s_{4}}(n, \overline{\psi})}{n^{\frac{1+s_{1}-s_{2}+s_{3}-s_{4}}{2}+it}} 
\sum_{\substack{cM_{1} \delta f = n \\ (c, \frac{q}{M}) = 1}} \overline{\chi}(c) \chi(f) c^{2it}.
\end{multline*}
The second line of the above equality boils down to
\begin{equation}\label{double-sum}
(M_{1} \delta)^{-\frac{1+s_{1}-s_{2}+s_{3}-s_{4}}{2}-it} \sum_{(c, \frac{q}{M}) = 1} \sum_{f = 1}^{\infty} \frac{\overline{\chi}(c) \chi(f) 
\sigma_{s_{3}-s_{4}}(cM_{1} \delta f, \overline{\psi})}{c^{\frac{1+s_{1}-s_{2}+s_{3}-s_{4}}{2}-it} f^{\frac{1+s_{1}-s_{2}+s_{3}-s_{4}}{2}+it}},
\end{equation}
and Lemma~\ref{Hecke-multiplicativity} yields
\begin{multline*}
\sigma_{s_{3}-s_{4}}(cM_{1} \delta f, \overline{\psi}) 
 = \mathop{\sum \sum \sum}_{e_{1} \mid c, \, e_{1} e_{2} \mid f, \, e_{2} \mid \delta} \mu(e_{1}) \mu(e_{2}) 
\overline{\psi}(e_{1} e_{2}) (e_{1} e_{2})^{s_{3}-s_{4}}\\ \times \sigma_{s_{3}-s_{4}} \left(\frac{c}{e_{1}}, \overline{\psi} \right) 
\sigma_{s_{3}-s_{4}} \left(\frac{f}{e_{1} e_{2}}, \overline{\psi} \right) \sigma_{s_{3}-s_{4}} \left(\frac{\delta}{e_{2}}, \overline{\psi} \right),
\end{multline*}
where $\sigma_{s_{3}-s_{4}}(M_{1}, \overline{\psi}) = 1$. In anticipation of the ensuing simplifications, we make a change of variables $c \mapsto ce_{1}$ and $f \mapsto fe_{1} e_{2}$. Then the double sum in~\eqref{double-sum} becomes
\begin{equation*}
\sum_{(c, \frac{q}{M}) = 1} \sum_{f = 1}^{\infty} \frac{\overline{\chi}(c) \chi(f) \sigma_{s_{3}-s_{4}}(c, \overline{\psi}) 
\sigma_{s_{3}-s_{4}}(f, \overline{\psi})}{c^{\frac{1+s_{1}-s_{2}+s_{3}-s_{4}}{2}-it} f^{\frac{1+s_{1}-s_{2}+s_{3}-s_{4}}{2}+it}} 
\sum_{e_{1} = 1}^{\infty} \sum_{e_{2} \mid \delta} \frac{\mu(e_{1}) \mu(e_{2}) \chi(e_{2}) \overline{\psi}(e_{1} e_{2}) 
\sigma_{s_{3}-s_{4}}(\frac{\delta}{e_{2}}, \overline{\psi})}{e_{1}^{1+s_{1}-s_{2}} e_{2}^{\frac{1+s_{1}-s_{2}-s_{3}+s_{4}}{2}+it}}.
\end{equation*}
Consequently, the sums over $c$, $e_{1}$, $f$ are equal to
\begin{equation*}
\frac{L(\frac{1+s_{1}-s_{2}+s_{3}-s_{4}}{2}+it, \chi) 
L(\frac{1+s_{1}-s_{2}-s_{3}+s_{4}}{2}+it, \chi \overline{\psi}) 
L^{\frac{q}{M}}(\frac{1+s_{1}-s_{2}+s_{3}-s_{4}}{2}-it, \overline{\chi}) 
L(\frac{1+s_{1}-s_{2}-s_{3}+s_{4}}{2}-it, \overline{\chi \psi})}{L(1+s_{1}-s_{2}, \overline{\psi})}.
\end{equation*}
On the other hand, the sum over $m$ is calculated as
\begin{multline*}
\sum_{m = 1}^{\infty} \frac{\rho_{\chi, M, q}(m)}{m^{\frac{s_{1}+s_{2}+s_{3}+s_{4}-1}{2}}}
 =  \frac{C(\chi, M, t)}{(q \nu(q))^{\frac{1}{2}} \widetilde{\mathfrak{n}}_{q}(M) L^{q}(1+2it, \chi^{2})} \left(\frac{M_{1}}{M_{2}} \right)^{\frac{1}{2}} 
\sum_{\delta \mid M_{2}} \delta \mu \left(\frac{M_{2}}{\delta} \right) \overline{\chi}(\delta)\\
\times (M_{1} \delta)^{\frac{1-s_{1}-s_{2}-s_{3}-s_{4}}{2}+it} \sum_{(c, \frac{q}{M}) = 1} \sum_{f = 1}^{\infty} 
\frac{\chi(c) \overline{\chi}(f)}{c^{\frac{s_{1}+s_{2}+s_{3}+s_{4}-1}{2}+it} f^{\frac{s_{1}+s_{2}+s_{3}+s_{4}-1}{2}-it}}.
\end{multline*}
The second line of the above equality boils down to
\begin{equation*}
(M_{1} \delta)^{\frac{1-s_{1}-s_{2}-s_{3}-s_{4}}{2}+it} L^{\frac{q}{M}} \left(\frac{s_{1}+s_{2}+s_{3}+s_{4}-1}{2}+it, \chi \right) 
L \left(\frac{s_{1}+s_{2}+s_{3}+s_{4}-1}{2}-it, \overline{\chi} \right).
\end{equation*}
The contribution of holomorphic newforms follows the above treatment verbatim.

Hence, the resulting form of the cubic moment is
\begin{equation*}
\mathcal{J}_{\pm} \coloneqq \mathcal{J}_{\pm}^{\text{Maa{\ss}}}
 + \mathcal{J}_{\pm}^{\text{Eis}}+\delta_{\pm = +} \mathcal{J}_{+}^{\text{hol}},
\end{equation*}
where
\begin{multline*}
\mathcal{J}_{\pm}^{\text{Maa{\ss}}} \coloneqq \sum_{q_{0} M \mid q} 
\sum_{f \in \mathcal{B}^{\ast}(q_{0})} \frac{L(\frac{s_{1}+s_{2}+s_{3}+s_{4}-1}{2}, f) 
L(\frac{1+s_{1}-s_{2}+s_{3}-s_{4}}{2}, f) L(\frac{1+s_{1}-s_{2}-s_{3}+s_{4}}{2}, f \otimes \overline{\psi})}
{L(1, \operatorname{ad} f)}\\
\times \Theta_{q}^{\text{Maa{\ss}}}(s_{1}, s_{2}, s_{3}, s_{4}, f),
\end{multline*}
\begin{multline*}
\mathcal{J}_{\pm}^{\text{Eis}} \coloneqq \sum_{c_{\chi}^{2} \mid M \mid q} 
\int_{\mathbb{R}} \frac{L(\frac{1+s_{1}-s_{2}+s_{3}-s_{4}}{2}+it, \chi) 
L^{\frac{q}{M}}(\frac{s_{1}+s_{2}+s_{3}+s_{4}-1}{2}+it, \chi) 
L(\frac{1+s_{1}-s_{2}-s_{3}+s_{4}}{2}+it, \chi \overline{\psi})}{L^{q}(1+2it, \chi^{2})}\\
\times \frac{L(\frac{s_{1}+s_{2}+s_{3}+s_{4}-1}{2}-it, \overline{\chi}) 
L^{\frac{q}{M}}(\frac{1+s_{1}-s_{2}+s_{3}-s_{4}}{2}-it, \overline{\chi}) 
L(\frac{1+s_{1}-s_{2}-s_{3}+s_{4}}{2}-it, \overline{\chi \psi})}{L^{q}(1-2it, \overline{\chi}^{2})}\\
\times \Theta_{\chi, q}^{\text{Eis}}(s_{1}, s_{2}, s_{3}, s_{4}, t) \frac{dt}{2\pi},
\end{multline*}
\begin{multline*}
\mathcal{J}_{+}^{\text{hol}} \coloneqq  \sum_{q_{0} M \mid q} 
\sum_{f \in \mathcal{B}_{\text{hol}}^{\ast}(q_{0})} \frac{L(\frac{s_{1}+s_{2}+s_{3}+s_{4}-1}{2}, f) 
L(\frac{1+s_{1}-s_{2}+s_{3}-s_{4}}{2}, f) L(\frac{1+s_{1}-s_{2}-s_{3}+s_{4}}{2}, f \otimes \overline{\psi})}
{L(1, \operatorname{ad} f)}\\
\times \Theta_{q}^{\text{hol}}(s_{1}, s_{2}, s_{3}, s_{4}, f)
\end{multline*}
with the local factors defined by
\begin{multline}\label{Theta-1}
\Theta_{q}^{\text{Maa{\ss}}}(s_{1}, s_{2}, s_{3}, s_{4}, f)
\coloneqq \varepsilon_{f}^{\frac{1 \mp 1}{2}} \frac{q^{s_{1}-s_{2}-1} \tau(\psi)}{M \nu(q)} \prod_{p \mid q_{0}} 
\left(1-\frac{1}{p^{2}} \right)\\
\times \sum_{d_{1} \mid M} \sum_{d_{2} \mid M} \frac{\xi_{f}(M, d_{1}) \xi_{f}(M, d_{2}) 
\Phi_{\texttt{s}}^{\pm}(it_{f})}{d_{1}^{\frac{s_{1}+s_{2}+s_{3}+s_{4}-3}{2}} d_{2}^{\frac{s_{1}-s_{2}+s_{3}-s_{4}-1}{2}}},
\end{multline}
\begin{multline}\label{Theta-2}
\Theta_{\chi, q}^{\text{Eis}}(s_{1}, s_{2}, s_{3}, s_{4}, t)
\coloneqq \frac{q^{s_{1}-s_{2}-1} \tau(\psi)}{\nu(q) \widetilde{\mathfrak{n}}_{q}(M)^{2}} \frac{M_{1}^{1-s_{1}-s_{3}}}{M_{2}} 
\sum_{\delta_{1} \mid M_{2}} \sum_{\delta_{2} \mid M_{2}} \frac{\mu(\frac{M_{2}}{\delta_{1}}) 
\mu(\frac{M_{2}}{\delta_{2}}) \overline{\chi}(\delta_{1}) \chi(\delta_{2})}{\delta_{1}^{\frac{s_{1}+s_{2}+s_{3}+s_{4}-3}{2}-it} 
\delta_{2}^{\frac{s_{1}-s_{2}+s_{3}-s_{4}-1}{2}+it}}\\
\times \sum_{e \mid \delta_{2}} \mu(e) \chi(e) \overline{\psi}(e) 
e^{-\frac{1+s_{1}-s_{2}-s_{3}+s_{4}}{2}-it} \sigma_{s_{3}-s_{4}} \left(\frac{\delta}{e}, \overline{\psi} \right) \Phi_{\texttt{s}}^{\pm}(it),
\end{multline}
\begin{multline}\label{Theta-3}
\Theta_{q}^{\text{hol}}(s_{1}, s_{2}, s_{3}, s_{4}, f) \coloneqq \frac{i^{k} \tau(\psi)}{2Mq \nu(q)} 
\left(\frac{q}{2\pi} \right)^{s_{1}-s_{2}} \left\{e \left(\frac{s_{3}-s_{4}}{4} \right)+\psi(-1)e \left(-\frac{s_{3}-s_{4}}{4} \right) \right\}\\
\times \prod_{p \mid q_{0}} \left(1-\frac{1}{p^{2}} \right) \sum_{d_{1} \mid M} 
\sum_{d_{2} \mid M} \frac{\xi_{f}(M, d_{1}) \xi_{f}(M, d_{2})}
{d_{1}^{\frac{s_{1}+s_{2}+s_{3}+s_{4}-3}{2}} d_{2}^{\frac{s_{1}-s_{2}+s_{3}-s_{4}-1}{2}}} \Xi_{\texttt{s}} \left(\frac{k_{f}-1}{2} \right).
\end{multline}
The proof of Theorem~\ref{main} is now complete.\qed

\section{Proof of Corollary~\ref{corollary}}
Given a parameter $H$ at our disposal, we specialise the test function in Theorem~\ref{main} as
\begin{equation*}
g(t) = \frac{1}{\sqrt{\pi} H} \exp \left(-\left(\frac{t-T}{H} \right)^{2} \right).
\end{equation*}
Suppose that
\begin{equation*}
\sqrt{T} \leq H \leq T(\log T)^{-1}.
\end{equation*}
The corresponding integral transform $\Xi$ is analysed in~\cite[Equations (5.1.40)--(5.1.42)]{Motohashi1997-2} that are also useful in our context. We derive an asymptotic formula that is almost identical to~\cite[Theorem 5.1]{Motohashi1997-2}. Estimating the spectral sum by absolute values yields
\begin{multline}\label{inequality}
\int_{T}^{T+H} \left|\zeta \left(\frac{1}{2}+it \right) L \left(\frac{1}{2}+it, \psi \right) \right|^{2} dt\\
\ll_{\varepsilon} H^{1+\varepsilon} q^{\varepsilon}+\frac{H^{\frac{3}{2}} q^{-\frac{1}{2}+\varepsilon}}{T} \sum_{t_{f} \ll \frac{T}{H}} \left|L \left(\frac{1}{2}, f \right)^{2} L \left(\frac{1}{2}, f \otimes \psi \right) \right|,
\end{multline}
where the level of Hecke--Maa{\ss} newforms is $q$. Here we also evaluated the finite sums in the local factors using~\eqref{xi}. The truncation of the spectral sum is justified since the Taylor~series expansion of $\exp(-(Ht_{f}/T)^{2}/4)$ in~\cite[Equation (5.1.44)]{Motohashi1997-2} implies that the rest of the spectral sum is smaller than $O_{\varepsilon}(H^{1+\varepsilon} q^{\varepsilon})$. Because $\psi$ is quadratic, the central $L$-values in~\eqref{inequality} are nonnegative, and one can apply H\"{o}lder's inequality with exponent $(\frac{3}{2}, 3)$. It follows from the result of Conrey--Iwaniec~\cite{ConreyIwaniec2000} or Petrow--Young~\cite{PetrowYoung2023} that
\begin{equation*}
H^{1+\varepsilon} q^{\varepsilon}+\frac{H^{\frac{3}{2}}}{T} q^{-\frac{1}{2}+\varepsilon} \left(\frac{qT^{2}}{H^{2}} \right)^{\frac{2}{3}+\varepsilon} 
\left(\frac{qT^{2}}{H^{2}} \right)^{\frac{1}{3}+\varepsilon} \ll_{\varepsilon} H^{1+\varepsilon} q^{\varepsilon}+\left(\frac{qT^{2}}{H} \right)^{\frac{1}{2}+\varepsilon}.
\end{equation*}
Optimising $H = q^{\frac{1}{3}} T^{\frac{2}{3}}$ completes the proof of Corollary~\ref{corollary}.


\providecommand{\bysame}{\leavevmode\hbox to3em{\hrulefill}\thinspace}
\providecommand{\MR}{\relax\ifhmode\unskip\space\fi MR }
\providecommand{\MRhref}[2]{%
  \href{http://www.ams.org/mathscinet-getitem?mr=#1}{#2}
}
\providecommand{\href}[2]{#2}

\end{document}